\colorlet{darkgreen}{green!50!black}
\renewcommand*{\backref}[1]{}
\renewcommand*{\backrefalt}[4]{%
    \scriptsize%
    {
    \ifcase #1 (\textcolor{red}{Uncited.})%
          \or (Cit.\ on p.~#2)%
          \else (Cit.\ on pp.~#2)%
    \fi%
    }
}
\title[Derived category of a geometric stack]{On the derived category of quasi-coherent sheaves on an Adams geometric stack}
\author[L. Alonso]{Leovigildo Alonso Tarr\'{\i}o}
\address[L. A. T.]{Departamento de Matem\'a\-ticas\\
Universidade de Santiago de Compostela\\
E-15782  Santiago de Compostela, Spain}
\email{leo.alonso@usc.es}
\author[A. Jerem\'{\i}as]{Ana Jerem\'{\i}as L\'opez}
\address[A. J. L.]{Departamento de Matem\'a\-ticas\\
Universidade de Santiago de Compostela\\
E-15782  Santiago de Compostela, Spain}
\email{ana.jeremias@usc.es}
\author[M. P\'erez]{Marta P\'erez Rodr\'{\i}guez}
\address[M. P. R.]{Departamento de Matem\'a\-ticas\\
Esc. Sup. de Enx. Inform\'atica,
Campus de Ourense\\
Universidade de Vigo\\
E-32004 Ourense, Spain}
\email{martapr@uvigo.es}
\author[M. J. Vale]{Mar\'{\i}a J. Vale Gonsalves}
\address[M. J. V.]{Departamento de Matem\'a\-ticas\\
Universidade de Santiago de Compostela\\
E-15782  Santiago de Compostela, Spain}
\email{mj.vale@usc.es}
\thanks{This work has been partially supported by
Spain's MINECO and E.U.'s FEDER research projects MTM2011-26088, MTM2014-59456
and Xunta de Galicia's GRC2013-045}
\subjclass[2000]{14A20 (primary); 14F05, 18D10 (secondary)}
\date{April, 26, 2017, \emph{typeset}: \today}
\theoremstyle{plain}
\newtheorem{thm}{Theorem}[section]
\newtheorem{lem}[thm]{Lemma}
\newtheorem{cor}[thm]{Corollary}
\newtheorem{prop}[thm]{Proposition}
\theoremstyle{remark}
\newtheorem*{rem}{Remark}
\theoremstyle{definition}
\newtheorem*{ex}{Example}
\newtheorem*{ack}{Acknowledgements}
\newtheorem{cosa}[thm]{}
\numberwithin{equation}{thm}
\newcommand{\CA}{\mathcal{A}}
\newcommand{\CE}{{\mathcal E}}
\newcommand{\CF}{{\mathcal F}}
\newcommand{\CG}{{\mathcal G}}
\newcommand{\CH}{{\mathcal H}}
\newcommand{\CI}{{\mathcal I}}
\newcommand{\CO}{\mathcal{O}}
\newcommand{\CP}{\mathcal{P}}
\newcommand{\SB}{\mathsf{B}}
\newcommand{\SF}{\mathsf{F}}
\newcommand{\SP}{\mathsf{P}}
\newcommand{\SQ}{\mathsf{Q}}
\newcommand{\SR}{\mathsf{R}}
\renewcommand{\SS}{\mathsf{S}}
\newcommand{\SST}{\mathsf{T}}
\newcommand{\ST}{\boldsymbol{\mathsf{T}}}
\newcommand{\SU}{\mathsf{U}}
\newcommand{\SW}{\mathsf{W}}
\newcommand{\SX}{\boldsymbol{\mathsf{X}}}
\newcommand{\CCC}{\boldsymbol{\mathsf{C}}}
\newcommand{\D}{\boldsymbol{\mathsf{D}}}
\newcommand{\K}{\boldsymbol{\mathsf{K}}}
\newcommand{\LL}{\boldsymbol{\mathsf{L}}}
\newcommand{\R}{\boldsymbol{\mathsf{R}}}
\newcommand{\A}{\mathsf{A}}
\newcommand{\sch}{\mathsf {Sch}}
\newcommand{\cc}{\mathsf{c}}
\newcommand{\md}{\text{-}\mathsf{Mod}}
\newcommand{\com}{\text{-}\mathsf{coMod}}
\newcommand{\qc}{\mathsf{qc}}
\newcommand{\op}{\mathsf{o}}
\newcommand{\ab}{\mathsf{Ab}}
\newcommand{\ZZ}{\mathbb{Z}}
\newcommand*{\longrightrightarrows}{
\ensuremath{%
\makebox[0pt][l]
{\raisebox{0.3ex}{\ensuremath{\longrightarrow}}}%
\raisebox{-0.3ex}{\ensuremath{\longrightarrow}}%
}}
\newcommand*{\longleftrightarrows}{
\ensuremath{%
\makebox[0pt][l]
{\raisebox{0.3ex}{\ensuremath{\longleftarrow}}}%
\raisebox{-0.3ex}{\ensuremath{\longrightarrow}}%
}}
\newcommand{\lto}{\longrightarrow}
\newcommand{\xto}{\xrightarrow}
\newcommand{\lot}{\longleftarrow}
\DeclareMathOperator{\iso}{\tilde{\to}}
\DeclareMathOperator{\liso}{\tilde{\lto}}
\DeclareMathOperator{\losi}{\tilde{\lot}}
\DeclareMathOperator{\Hom}{Hom}
\DeclareMathOperator{\shom}{\CH\mathit{om}}
\DeclareMathOperator{\rshom}{\R\!\shom}
\DeclareMathOperator{\dhom}{\pmb{\CH}\mathsf{om}}
\DeclareMathOperator{\spec}{Spec}
\DeclareMathOperator{\h}{H}
\DeclareMathOperator{\id}{id}
\DeclareMathOperator{\iid}{\mathsf{id}}
\DeclareMathOperator{\ev}{ev}
\DeclareMathOperator{\qco}{\mathsf{Qco}}
\DeclareMathOperator{\stack}{\mathsf{Stck}}
\DeclareMathOperator{\aff}{\mathsf{Aff}}
\DeclareMathOperator{\ff}{\mathsf{fppf}}
\newcommand{\GGamma}{\boldsymbol{\Gamma}}
\newcommand{\wadj}[1]{{#1}^\mathsf{a}}
\newcommand{\hv}{\mathsf{Hov}}
\newcommand{\chom}[1]{\mathsf{Hom}^{\cc}_{#1}}
\newcommand{\cdhom}[1]{\R\mathsf{Hom}^{\bullet\,\cc}_{#1}}
\newcommand{\ie}{{\it i.e.\/}}
\newcommand{\cfr}{{\it cf.\/}}
\newcommand{\lc}{{\it loc.~cit.\/} }
\begin{document}

\begin{abstract} 
Let $\SX$ be an Adams geometric stack. We show that $\D(\A_\qc(\SX))$, its derived category of quasi-coherent sheaves, satisfies the axioms of a stable homotopy category defined by Hovey, Palmieri and Strickland in \cite{hps}. Moreover we show how this structure relates to the derived category of comodules over a Hopf algebroid that determines $\SX$.
\end{abstract}

\maketitle
\tableofcontents

\section*{Introduction}
Let $X$ be a quasi-compact and semi-separated scheme. In \cite{asht} it is shown that its derived category of quasi-coherent sheaves $\D(\A_\qc(X))$ satisfies the axioms of a stable homotopy category from \cite{hps}. During the preparation of that paper we were asked whether the same result holds for the derived category of quasi-coherent sheaves on an algebraic stack. Unfortunately, the available references \cite{lmb} and \cite{O} suffered from some inaccuracies, and most importantly, did not contain the existence of generators in the category of quasi-coherent sheaves. To remedy this we embarked on a project of reconciling all the available definitions and settling the question of existence of generators. The project has born fruit in the form of \cite{gstck}.

In the latter paper we stick to the context of \emph{geometric stacks}, \ie{} those that are quasi-compact and possess an affine diagonal (in other words, they are semi-separated). This is a minimal requirement for our goal in view of the necessity of this hypothesis already in the scheme case. On the other hand, these stacks admit a representation by an \emph{affine} groupoid scheme, which corresponds by the algebra-geometry duality to an algebraic gadget called a Hopf algebroid. These objects play an important role in homotopy theory, in the context of orientable generalized cohomology theories and the main reference for this is \cite{R}. In this setting, quasi-coherent sheaves on a geometric stack correspond to comodules over the defining Hopf algebroid. This was used in a crucial way in \cite{goe} and \cite{hop} where the authors looked at the moduli stack of formal groups which is an \emph{ind}-geometric stack. In general, the main properties of quasi-coherent sheaves using comodules were studied by Hovey in \cite{hmh} and \cite{hov}.

In this paper, we will follow the general conventions and notations in \cite{lmb} and \cite{gstck}. Our setup differs from the one in \cite{stack}; this paper is essentially independent of it. Their treatment is more general due to the fact that their algebraic stacks have less strict  conditions on their diagonals. 

Let us explain some differences between the present approach and the one at \cite{stack}. In this paper $\D_\qc(\SX) := \D_\qc(\SX_{\ff}, \CO_{\SX})$ where $\SX_{\ff}$ denotes the so called small flat site. In Stacks Project, big flat sites relative to a choice of a small category of schemes are considered, see specifically \cite[Tags \href{http://stacks.math.columbia.edu/tag/06TH}{06TH}, \href{http://stacks.math.columbia.edu/tag/021R}{021R}]{stack}. This has the merit of simplifying the proof of functoriality of the corresponding categories of sheaves of modules and its derived counterparts. However, in the \cite{stack} setting, the category $\CO_{\SX}\md$ depends on this choice, because the size of the modules of sections over the objects of the site is bounded by the bigger cardinal available. By \cite[Tag \href{http://stacks.math.columbia.edu/tag/07B9}{07B9}]{stack}, $\D_\qc(\sch_{\ff}/\SX, \CO_{\SX})$ does not depend because it is equivalent to $\D_\qc(\SX_{{\mathsf{liss-\acute{e}t}}}, \CO_{\SX})$. This last category agrees with our $\D_\qc(\SX)$ because quasi-coherent sheaves are the same on both sites as they both correspond to Cartesian presheaves \cite[Theorem 3.12]{gstck}.

We have been asked about the choice of $\D(\A_\qc(\SX))$ instead of the usual $\D_\qc(\SX)$. A simple reason is that under our hypothesis there is an equivalence between $\D^+(\A_\qc(\SX))$ and  $\D^+_\qc(\SX)$  (Proposition \ref{Qequiv}). Notice that without the semi-separation hypothesis this equivalence need not hold.

It turns out that the existence of nice generators in the category of quasi-coherent sheaves is problematic. We have to impose the so called \emph{Adams condition}. This property of the category of comodules over a Hopf algebroid is equivalent to the classical resolution property on schemes (see the discussion in section 2). Under this additional hypothesis, one can prove the existence of dualizable generators and a structure of symmetric closed monoidal category on the derived category, thus fulfilling the axioms of Hovey, Palmieri and Strickland. Unlike the case of schemes, on geometric stacks the existence of \emph{compact} generators or, more precisely, the question whether dualizable complexes are compact is a delicate one. The failure is due to the existence of stacks with  infinite homological dimension, as the classifying stack of an algebraic group attests. In the case of finite homological dimension, Hall and Rydh proved that this difficulty does not arise \cite{hr}. One may think that cohomology of quasi-coherent sheaves on an algebraic stack generalizes both cohomology of quasi-coherent sheaves on a scheme and cohomology of representations on a group scheme.

In a sense this paper addresses the same problem as Hovey's \cite{hov}. The results are of a  different sort. In Hovey's words, he considers the stable \emph{homotopy} theory of comodules rather than its \emph{homology}. In practice, this means that he considers an a priori different localization of the categories of complexes of quasi-coherent sheaves on a stack. We ignore if both categories agree but see \ref{compHov} for a detailed discussion. In any case, we stress that our methods differ from those in \cite{hov}, as there homotopical algebra and model categories are used while in the present paper we employ just homological algebra and derived categories.

Let us discuss the contents of this paper. Let us recall first the axioms of stable homotopy category from \cite{hps}. Let $\ST$ be a category, we say that $\ST$ is a stable homotopy category if the following hold:
\begin{enumerate}
\item \label{tri} $\ST$ is a triangulated category.
\item \label{scl} $\ST$ is a symmetric closed category.
\item \label{sdg} $\ST$ possesses a system of \emph{strongly dualizable} generators.
\item \label{cop} $\ST$ possesses arbitrary coproducts.
\item \label{brw} A cohomological functor $\SF \colon \ST^{\op} \to \ab$ is representable, \ie{} there is a canonical isomorphism $\SF \cong \Hom_{\ST}(-,X)$ with $X \in \ST$.
\end{enumerate}

Fix from now on a geometric stack $\SX$. In the first section, using the fact that $\A_\qc(\SX)$ is a Grothendieck category, we settle the conditions (\ref{tri}),  (\ref{cop}) and (\ref{brw}). We also discuss the coherator functor $\SQ$ and its derived functor $\R\SQ$. In the case of schemes the latter induces an equivalence between the categories $\D_\qc(\SX)$ and  $\D(\A_\qc(\SX))$. However, on geometric stacks $\R\SQ$ might not be bounded, so the equivalence holds between the bounded-below derived categories (see Proposition \ref{Qequiv}). In the next section we discuss the Adams property and its characterization in terms of the existence of global resolutions by previous results of Hovey and Sch\"appi \cite{Sch}.

Section 3 deals with the symmetric closed structure. The Adams condition guarantees the existence of flat resolutions. This permits the extension of the monoidal structure from 
$\A_\qc(\SX)$ to $\D(\A_\qc(\SX))$. Then we define the internal Hom functor by applying the coherator functor to the internal Hom sheaf and deriving it by quasi-coherent injective resolutions. The $\otimes$-Hom adjunction follows. In the next section we address the question of the existence of dualizable generators. These are a suitable representative set of isomorphy classes of perfect complexes. We show that perfect complexes are strongly dualizable in Proposition \ref{perfsd}. Its proof differs greatly from the corresponding one in the case of schemes \cite[Proposition 4.4]{asht} because the coherator does not provide in this case an equivalence of categories. All in all, this completes the verification of the axioms of stable homotopy category.

In the last section we consider a geometric stack $\SX = \stack(A_\bullet)$, presented as the stack associated to a groupoid in affine schemes defined by a Hopf algebroid $A_\bullet$. We show that the equivalence of categories from \cite[Corollary 5.9]{gstck} preserves the symmetric closed structures between the categories $\A_\qc(\SX)$ and $A_\bullet\com$ of quasi-coherent sheaves on $\SX$ and $A_\bullet$-comodules, respectively. Along the way we give a purely algebraic description of the internal Hom in $A_\bullet\com$. The equivalence extends to the corresponding derived categories preserving their symmetric closed structures.

\begin{ack}
 We thank A. J. de Jong for illuminating conversations about the construction of big sites in \cite{stack}.
\end{ack}

\section{Basic results}

We will follow the notation and conventions in \cite{gstck}. In particular, we will denote by $\SX$ a geometric stack (\ie{} a semi-separated and quasi-compact Artin stack). Further, we will denote by $\SX_{\ff}$ the topos associated to its \emph{small flat site} $\aff_{\ff}/\SX$. Its objects are pairs $(V,v)$ with $V$ an affine scheme and $v \colon V \to \SX$ a flat finitely presented 1-morphism of stacks and its coverings are given by jointly surjective families of finitely presented flat maps. We recall that $\SX$ admits a presentation $p \colon U \to \SX$ where $U$ is an affine scheme and $p$ is a surjective smooth morphism, see \cite[\S 3.1]{gstck}. In this setting, $p$ is an affine morphism, see \lc{}\! Sometimes it will be enough to take a faithfully flat morphism of finite presentation. All our presentations will be of this kind.

\begin{cosa}
Notice that there is a canonically defined sheaf of rings on $\aff_{\ff}/\SX$, \ie{} a ring object in $\SX_{\ff}$, that we denote by $\CO_{\SX}$. There is a naturally associated category of sheaves of modules, $\CO_{\SX}\md$. As $\SX_{\ff}$ has enough points \cite[Remark after 3.6]{gstck}, $\CO_{\SX}\md$ is abelian, with exact directed limits and (a set of) generators, in other words, a Grothendieck category\footnote{For a proof, one may consult \cite[Theorem 18.1.6]{ks}, having in mind that $\CO_{\SX}$ is a $\ZZ_{\SX}$-algebra.}. Following our previous usage we will denote it by $\A(\SX) := \CO_{\SX}\md$ and its derived category by $\D(\SX)$.
\end{cosa}

\begin{cosa}
Inside of $\A(\SX)$ there is a full abelian subcategory of quasi-coherent $\CO_{\SX}$-Modules on $\SX_{\ff}$ denoted $\qco(\SX)$ in \cite{gstck}. Notice that it has two possible descriptions, as per the definition through local presentations \cite[1.6]{gstck} or the equivalent characterization as \emph{Cartesian presheaves}, \cite[1.3]{gstck} \ie{} presheaves whose restriction map induces an isomorphism after base change. The agreement of both notions is proved in \cite[Theorem 3.12]{gstck}. From now on we will denote this category as $\A_\qc(\SX) := \qco(\SX)$  and its derived category by $\D(\A_\qc(\SX))$.
\end{cosa}

We quote the following result. It will unlock many of the features of the derived category of the category of quasi-coherent $\CO_{\SX}$-Modules on the small flat site of a geometric stack.

\begin{thm}
For a geometric stack $\SX$, $\A_\qc(\SX)$ is a Grothendieck category.
\end{thm}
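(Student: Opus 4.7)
The plan is to verify the three constituent conditions of a Grothendieck category for $\A_\qc(\SX)$ viewed as a full subcategory of the Grothendieck category $\A(\SX) = \CO_{\SX}\md$: (i) that it is an abelian subcategory, (ii) that filtered colimits in $\A_\qc(\SX)$ are exact, and (iii) that there exists a (set of) generator(s).

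For (i), I would exploit the identification of $\A_\qc(\SX)$ with the category of Cartesian presheaves on the small flat site (Theorem 3.12 of \cite{gstck}). The key point is that for any flat morphism $W \to V$ in $\aff_\ff/\SX$, tensoring with $\CO(W)$ over $\CO(V)$ is exact. Consequently, the kernel and cokernel of a morphism of Cartesian presheaves, computed sectionwise (which agrees with the computation in $\A(\SX)$), automatically satisfy the Cartesian condition, so $\A_\qc(\SX)$ is closed under kernels and cokernels inside $\A(\SX)$ and inherits its abelian structure.

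For (ii), the same principle applies: since tensor products commute with colimits, filtered colimits of Cartesian presheaves are Cartesian, and they coincide with the filtered colimit computed in $\A(\SX)$. Because $\A(\SX)$ is Grothendieck its filtered colimits are exact, and exactness descends to the full subcategory $\A_\qc(\SX)$.

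For (iii), fix a smooth affine presentation $p\colon U \to \SX$. The strategy is to show that every object of $\A_\qc(\SX)$ is the directed union of its quasi-coherent sub-sheaves of bounded size. Concretely, one picks an infinite cardinal $\kappa$ depending on $|\CO(U)|$ and on the groupoid $U\times_{\SX}U \rightrightarrows U$, and argues that any quasi-coherent $\CF$ is the filtered colimit of its Cartesian sub-presheaves whose sections are bounded by $\kappa$; a set of representatives of isomorphism classes of such sub-sheaves then yields a generating family, whose direct sum is a generator.

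The main obstacle is (iii). The delicate issue is controlling the size of a Cartesian presheaf in a way compatible with descent: one must show that the sub-Cartesian-presheaf generated by a small family of local sections on $U$ remains small over every object $(V,v)$ of the site, which forces one to reconstruct sections from descent data along $p$. This is precisely the point where the semi-separatedness (hence affineness of $p$) and quasi-compactness of $\SX$ are essential, and it is the step that cannot be borrowed directly from the scheme-theoretic proof; everything else is a formal consequence of the Cartesian-presheaf description plus the fact that $\A(\SX)$ is already Grothendieck.
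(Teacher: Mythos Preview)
Your proposal is a reasonable sketch of a direct proof, but it is not what the paper does here: the paper's entire proof is the single sentence ``See \cite[Corollary 5.10]{gstck}.'' In that reference the result is obtained as an immediate corollary of the equivalence $\A_\qc(\SX)\simeq A_\bullet\com$ with the category of comodules over a presenting flat Hopf algebroid $A_\bullet$ (their Corollary 5.9), together with the (standard) fact that $A_\bullet\com$ is Grothendieck. So rather than bounding sizes of Cartesian sub-presheaves, the cited argument transports the problem to pure algebra: once one is in $A_\bullet\com$, generators come for free from the extended comodules $A_1\otimes_{A_0}M$ with $M$ ranging over a generating set of $A_0$-modules.

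The contrast is that your route stays on the sheaf side and tackles the generator problem by a Gabber-style cardinality/descent argument, whereas the paper's route (via \cite{gstck}) first proves the comodule equivalence --- which is where the real work of descent along $p$ is done --- and then reads off the Grothendieck property from the algebraic side. Your steps (i) and (ii) are essentially formal consequences of flatness and the Cartesian description (and are correct as stated), so both approaches agree there; the divergence is entirely in (iii). Your approach has the advantage of being intrinsic and not relying on a choice of presentation, but the cardinality control you flag as the ``main obstacle'' is genuinely nontrivial to write out cleanly; the comodule route trades that difficulty for the equivalence theorem, after which the generator question becomes elementary.
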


\begin{proof}
See \cite[Corollary 5.10]{gstck}.
\end{proof}

\begin{thm}\label{asht145}
 The category $\D(\A_\qc(\SX))$ satisfies the axioms (\ref{tri}),  (\ref{cop}) and (\ref{brw}) from \cite[1.1]{asht} or \cite[1.1]{hps}.
\end{thm}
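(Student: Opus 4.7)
The plan is to leverage the preceding theorem that $\A_\qc(\SX)$ is a Grothendieck category, since each of the three axioms is a general consequence of this for the associated derived category.

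For axiom (\ref{tri}), there is nothing to do beyond observing that $\A_\qc(\SX)$ is abelian: its derived category carries the canonical triangulated structure, with distinguished triangles the ones isomorphic to mapping cone triangles.

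For axiom (\ref{cop}), I would first recall that in a Grothendieck category arbitrary coproducts exist and, being filtered colimits of finite direct sums, are exact. Applying this termwise to complexes gives coproducts at the level of $\CCC(\A_\qc(\SX))$, and exactness ensures they preserve quasi-isomorphisms, so they descend to coproducts in $\D(\A_\qc(\SX))$. Alternatively, one can invoke directly the general fact that $\D(\CA)$ admits all small coproducts whenever $\CA$ is Grothendieck; this is essentially \cite[asht]{} in the affine-scheme case and extends verbatim here.

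For axiom (\ref{brw}), the statement is Brown representability for cohomological functors $\SF\colon \D(\A_\qc(\SX))^{\op} \to \ab$ that convert coproducts into products. Here I would appeal to the theorem of Neeman (building on Franke) asserting that for any Grothendieck abelian category $\CA$, the derived category $\D(\CA)$ is well generated in the sense of \cite{hps}-style triangulated categories, and every well generated triangulated category satisfies Brown representability. Applied to $\CA = \A_\qc(\SX)$, this yields (\ref{brw}) immediately.

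The main conceptual point is that all three axioms are formal consequences of the Grothendieck property combined with standard triangulated-category technology; the nontrivial work has already been done in \cite{gstck} to establish that $\A_\qc(\SX)$ is Grothendieck. The only step which is not entirely routine is the invocation of Brown representability, where one must cite (rather than reprove) the well-generation result; this is the step I would expect a referee to want a precise reference for, so I would pin down in the write-up exactly which form of Neeman's theorem is being used.
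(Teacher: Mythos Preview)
Your proposal is correct and follows essentially the same approach as the paper: all three axioms are deduced from the fact that $\A_\qc(\SX)$ is a Grothendieck category, with (\ref{tri}) trivial, (\ref{cop}) from exactness of coproducts in an AB5 category, and (\ref{brw}) from a general representability theorem. The only difference is the citation for (\ref{brw}): the paper invokes \cite[Theorem 5.8]{AJS}, which establishes Brown representability directly for the derived category of any Grothendieck category, whereas you route through Neeman's well-generation theorem; both are valid, but the AJS reference is more direct and avoids the intermediate notion of well-generated triangulated categories.
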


\begin{proof}
These assertions are consequences of the fact that $\A_\qc(\SX)$ is a Grothen\-dieck category. 

The statement (\ref{tri}) is trivial because a derived category is triangulated by construction (\cfr{} \cite[Example (1.4.4)]{yellow}). The existence of coproducts is due to existence and exactness of coproducts in an AB5 category. And (\ref{brw}) is satisfied because a cohomological functor from the derived category of a Grothendieck category to $\ab$ is representable as follows from \cite[Theorem 5.8]{AJS}.
\end{proof}


\begin{cosa} \textbf{The coherator functor on geometric stacks}.
 Let us consider the inclusion functor $\iota \colon \A_\qc(\SX) \to \A(\SX)$. We will construct a right adjoint, the coherator, $\SQ_{\SX} \colon \A(\SX) \to \A_\qc(\SX)$. 
 For an affine scheme the construction of the coherator is simple. Let $V$ be an affine scheme and $\CF \in \A(V)$. We take $\SQ_V \CF := \widetilde{\Gamma(V, \CF)}$ and $\alpha_{\CF} \colon\SQ_V \CF \to \CF$ the canonical map. It is well-known that in this case we have an adjunction $\iota \dashv \SQ_V$ \cite[Lemme 3.2]{erg}. For any affine scheme $V$  we will abbreviate $\SQ := \SQ_{V}$.

 Now, let $p \colon U \to \SX$ be a presentation of the geometric stack $\SX$. Recall that both $p^*$ and $p_*$ preserve quasi-coherence (\cite[Proposition 6.16 and 6.17]{gstck}, respectively).  Consider the Cartesian diagram:
 \begin{equation*}
\begin{tikzpicture}[baseline=(current  bounding  box.center)]
\matrix(m)[matrix of math nodes, row sep=2.6em, column sep=2.8em,
text height=1.5ex, text depth=0.25ex]{
  U\times_{\SX}U & U \\
  U              & \SX \\};
\path[->,font=\scriptsize,>=angle 90] (m-1-1) edge
node[auto] {$p_2$}
(m-1-2) edge node[left] {$p_1$} (m-2-1)
(m-1-2) edge node[auto] {$p$} (m-2-2)
(m-2-1) edge node[auto] {$p$} (m-2-2);
\draw [shorten >=0.2cm,shorten <=0.2cm, ->, double] (m-2-1) -- (m-1-2) node[auto, midway,font=\scriptsize]{$\phi$};
\end{tikzpicture}
\end{equation*}

By virtue of \cite[Corollary 6.15]{gstck}, there are natural isomorphisms 
\[
\phi^* \colon p_2^*p^* \to p_1^*p^* \text{ and } 
\phi_* \colon p_*p_{1*} \to p_*p_{2*}~.
\]
For $\CF \in \A(\SX)$, let $\SQ_{\SX} \CF$ be defined as the equalizer of the lower row of the following commutative diagram:

\[
\begin{tikzpicture}
\matrix (m) [matrix of math nodes, row sep=3em, column sep=4em]{
\CF     &   p_*p^* \CF\,      &  p_*p_{2~*}p_1^*p^* \CF \\
\SQ_{\SX} \CF &   p_*\SQ p^*\CF     &  p_*p_{2~*}\SQ p_1^*p^* \CF . \\
}; 
\draw[<-,font=\scriptsize, dashed] 
(m-1-1) -- node[left]{$\alpha_{\CF}$}(m-2-1) ;
\draw [transform canvas={yshift= 0.3ex},font=\scriptsize,->]
(m-1-2) -- node[above]{} (m-1-3);
\draw [transform canvas={yshift=-0.3ex},font=\scriptsize,->]
(m-1-2) -- node[below]{} (m-1-3);
\draw [transform canvas={yshift= 0.3ex},font=\scriptsize,->]
(m-2-2) -- node[above]{} (m-2-3);
\draw [transform canvas={yshift=-0.3ex},font=\scriptsize,->]
(m-2-2) -- node[below]{} (m-2-3);
\draw[<-,font=\scriptsize] (m-1-2) -- node[left]{$p_*\alpha_{p^*\CF}$}(m-2-2); 
\draw[<-,font=\scriptsize] (m-1-3) -- node[right]{$p_*p_{2~*}\alpha_{p_1^*p^*\CF}$}(m-2-3);
\draw[->,font=\scriptsize]
(m-1-1) edge node[auto] {} (m-1-2);
\draw[->,font=\scriptsize]
(m-2-1) edge node[auto] {} (m-2-2);
\end{tikzpicture}
\]
The top horizontal maps on the right are the compositions of the morphisms $p_*p^* \to p_*p_{i~*} p_i^*p^*$ defined via the units of the adjunctions $p_i^* \dashv p_{i\,*}$, with the natural isomorphisms 
\[
p_*p_{i~*} p_i^*p^* \liso p_*p_{2~*}p_1^*p^*
\]
induced by $\phi^*$ and $\phi_*$, where $i \in \{1, 2\}$.

The bottom horizontal maps are given by the same units composed now with the following composites of natural maps 
\[
 p_*p_{i~*} p_i^*\SQ p^* \lto p_*p_{i~*} \SQ p_i^* p^* \liso p_*p_{2~*} \SQ p_1^*p^*
\]
again $i \in \{1, 2\}$. Notice that the first map need not be an isomorphism. The map $\alpha_{\CF}$ is the one induced between both equalizers and it is an isomorphism whenever $\CF$ is quasi-coherent. It is straightforward to check that this construction yields an adjunction $\iota \dashv \SQ_{\SX}$ and $\alpha$ is the counit.
\end{cosa}

\begin{rem}
 Let $\SST = p_*p^*$. Recall the equalizer diagram \cite[Lemma 3.11]{gstck}:
 \[
\CF \longrightarrow \SST \CF \, \longrightrightarrows \, \SST^2 \CF
\]
Notice that it corresponds to the top row of the previous diagram by the base change isomorphism $p_{2~*}p_1^* \cong p^*p_*$. 
\end{rem}

The functor $\iota \colon \A_\qc(\SX) \to \A(\SX)$ is exact, while $\SQ_{\SX}$ is just left-exact. They induce an adjunction
\[
\D(\A_\qc(\SX)) \,
\underset{\iota}{\overset{\R\SQ_{\SX}}{\longleftrightarrows}} \,
\D(\SX)
\]
between the derived categories. Denote, as usual, by $\D_\qc(\SX)$ the full subcategory of $\D(\SX)$ formed by those complexes whose homology is quasi-coherent. We have the following

\begin{prop}\label{Qequiv}
 The previous adjunction induces an equivalence of categories $\D^+_\qc(\SX) \cong \D^+(\A_\qc(\SX))$.
\end{prop}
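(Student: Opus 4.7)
The plan is to show that the adjunction $\iota\dashv\R\SQ_{\SX}$ restricts to an adjoint equivalence $\D^+(\A_\qc(\SX))\simeq\D^+_\qc(\SX)$. Since $\iota$ is exact and preserves quasi-coherence on cohomology sheaves, it lands in $\D^+_\qc(\SX)$; similarly, $\SQ_{\SX}$ is left exact with values in $\A_\qc(\SX)$, so $\R\SQ_{\SX}$ lands in $\D^+(\A_\qc(\SX))$ on bounded-below input (although in general $\R\SQ_{\SX}$ can be unbounded above). It therefore suffices to verify that the unit $\id\to\R\SQ_{\SX}\iota$ and counit $\iota\R\SQ_{\SX}\to\id$ are isomorphisms on the respective subcategories.

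Both statements reduce, via the way-out-right principle for bounded-below derived categories, to verifying them on a single object concentrated in degree zero. For the unit on $\CG\in\A_\qc(\SX)$, the identification $H^0\R\SQ_{\SX}(\iota\CG)=\SQ_{\SX}\iota\CG\cong\CG$ is immediate from the fact that $\alpha_{\iota\CG}$ is an isomorphism when $\CG$ is quasi-coherent; the crucial remaining point is the vanishing $R^i\SQ_{\SX}(\iota\CG)=0$ for $i>0$. Once the unit is known to be an isomorphism on all of $\D^+(\A_\qc(\SX))$, the counit on a complex $\CF\in\D^+_\qc(\SX)$ with a single quasi-coherent cohomology sheaf---which is then quasi-isomorphic to $\iota H^0(\CF)$---is an isomorphism by the triangle identities, and the way-out-right lemma extends this to all of $\D^+_\qc(\SX)$.

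To establish the vanishing $R^i\SQ_{\SX}(\iota\CG)=0$ for $i>0$, one exploits that the presentation $p\colon U\to\SX$ is affine (as $\SX$ is semi-separated), and hence so are the two projections $p_1,p_2\colon U\times_{\SX}U\to U$. Consequently $p_*,p_{1*},p_{2*}$ are exact on quasi-coherent sheaves, while the coherator $\SQ_V$ is the identity on quasi-coherents on any affine $V$. Using the cosimplicial Cech-like resolution of $\iota\CG$ built from $\SST=p_*p^*$ recalled in the remark above, one can compute $\R\SQ_{\SX}(\iota\CG)$ directly by a complex of quasi-coherent sheaves that recovers $\CG$ in degree zero. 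The main obstacle is precisely this vanishing; its failure for general non-quasi-coherent input is what makes $\R\SQ_{\SX}$ potentially unbounded, and prevents the equivalence from extending to the full unbounded derived categories on a general geometric stack.
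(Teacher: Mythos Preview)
Your overall strategy is exactly the paper's: reduce via the way-out lemma to checking $\R\SQ_{\SX}(\iota\CG)\cong\CG$ for a single $\CG\in\A_\qc(\SX)$, and compute this using the cosimplicial resolution $\CG\to\SST^\bullet\CG$ with $\SST=p_*p^*$. Your treatment of the counit via the triangle identities is also fine.

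The gap is in your last paragraph. To use the resolution $\SST^\bullet\CG$ to compute $\R\SQ_{\SX}(\iota\CG)$ you must know that each term $\SST^n\CG$ is $\SQ_{\SX}$-\emph{acyclic}, i.e.\ that $R^i\SQ_{\SX}(p_*\CH)=0$ for $i>0$ whenever $\CH\in\A_\qc(U)$. The facts you invoke---that $p_*,p_{1*},p_{2*}$ are exact on quasi-coherent sheaves and that $\SQ_V$ is the identity on quasi-coherent sheaves over an affine $V$---do not by themselves yield this. Acyclicity for $\SQ_{\SX}$ is a statement about how $p_*\CH$ behaves with respect to injective resolutions in the \emph{big} category $\A(\SX)$, not just inside $\A_\qc(\SX)$; knowing that $\SQ_{\SX}$ restricts to the identity on quasi-coherent objects says nothing about its higher derived functors there.

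The paper supplies exactly this missing step. For $\CH\in\A_\qc(U)$ take an injective resolution $\CH\to\CI$ in $\A(U)$. Since $p^*$ is exact, $p_*\CI$ is a complex of injectives in $\A(\SX)$; and it is a \emph{resolution} of $p_*\CH$ because for every $(V,v)\in\aff_{\ff}/\SX$ the fibre product $V\times_{\SX}U$ is affine, so $H^i(V\times_{\SX}U,\CH)=0$ for $i>0$. Using $\SQ_{\SX}p_*\cong p_*\SQ$ (both are right adjoint to $p^*\iota_{\SX}=\iota_Up^*$) one then computes
\[
R^i\SQ_{\SX}(p_*\CH)\;\cong\;H^i(\SQ_{\SX}p_*\CI)\;\cong\;H^i(p_*\SQ\,\CI)\;\cong\;p_*\widetilde{H^i(U,\CH)}\;=\;0\quad(i>0),
\]
the last vanishing because $U$ is affine and $\CH$ is quasi-coherent. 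With this in hand your argument goes through: $\SST^\bullet\CG$ is a $\SQ_{\SX}$-acyclic resolution, $\SQ_{\SX}\SST^\bullet\CG=\SST^\bullet\CG$, and hence $\R\SQ_{\SX}(\iota\CG)\cong\CG$.
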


\begin{proof}
 It is enough to check that both the unit and counit of the adjunction restricted to the bounded below categories are equivalences. Now we can use the way-out Lemma \cite[Lemma (1.11.3)]{yellow} because both $\iota$ and $\R\SQ_{\SX}$ are bounded below, the former because it comes from an exact functor between abelian categories and the latter is a right derived funtor of a left exact funtor between abelian categories. Thus, by way-out we are reduced to check that $\R\SQ_{\SX} \CF \cong \CF$ for $\CF \in \A_\qc(\SX)$. Choose a  presentation $p \colon U \to \SX$ with $U$ an affine scheme. 

Let $\CG \in \A_\qc(U)$ and $\CG \to \CI$ be an injective resolution of $\CO_{U}$-Modules. By the exactness of $p^*$, we have that $p_*\CI$ is a complex of injective $\CO_{\SX}$-Modules. For all $(V,v) \in \aff_{\ff}/\SX$, it holds that $H^i(V \times_{\SX} U, \CG) = 0, \forall i > 0$, since $V \times_{\SX} U$ is an affine scheme \cite[\S 3.1]{gstck}. We conclude that $p_*\CI$ is a resolution of $p_*\CG$. As a consequence,
\[
\SR^i\SQ_{\SX} p_*\CG \cong
H^i(\SQ_{\SX} p_*\CI) \cong
H^i(p_*\SQ \CI) \cong
p_*H^i(\widetilde{\Gamma(U, \CI)}) \cong
p_*\widetilde{\h^i(U, \CG)} \cong 0
\]
from which we conclude that the complexes in the image of $p_*$ are acyclic for $\SQ_{\SX}$.
For $\CF \in \A_\qc(\SX)$, let $\SST^\bullet \CF$ be the complex
\[
\cdots \lto 0 \lto \SST \CF \lto \SST^2 \CF \lto \SST^3 \CF \lto \cdots .
\]
Notice that this yields a resolution $\CF \to \SST^\bullet \CF$ by $\SQ_{\SX}$-acyclic sheaves, therefore 
$\R\SQ_{\SX} \CF \cong \SQ_{\SX}\SST^\bullet \CF$ but the complex $\SST^\bullet \CF$ is formed by quasi-coherent sheaves, so $\SQ_{\SX}\SST^\bullet \CF = \SST^\bullet \CF \cong \CF$ and we are done.
\end{proof}

\begin{rem}
This statement is also proved in \cite[Theorem 3.8]{lur}, but the present proof is different. It is inspired by \cite[Proposition B.16]{tt}.
\end{rem}

\section{The resolution property and Adams stacks}

Let $\SX = \stack(A_\bullet)$ be a geometric stack, \ie{} a quasi-compact semi-separated algebraic stack. By choosing a presentation $p \colon U \to \SX$ with $U := \spec(A_0)$ an affine scheme and $A_1$ such that $U \times_{\SX} U = \spec(A_1)$, the couple $A_\bullet := (A_0, A_1)$ is a flat Hopf algebroid. See, for instance, \cite[\S 5.1]{gstck}. For a general discussion of Hopf algebroids, see \cite[Appendix A1]{R}.

\begin{cosa}
According to \cite[Definition 1.4.3]{hov}, $\SX$ satisfies the \emph{Adams condition} whenever $A_1$ is the filtered colimit of comodules whose underlying $A_0$-module is projective and finitely generated. 

We may rephrase the definition as follows: an Adams geometric stack is one such that, for any presentation $p \colon U \to \SX$ by an affine scheme, the sheaf $p_* \CO_U$ is a filtered direct limit of finitely generated locally free sheaves on $\SX$. Indeed, through the equivalence of categories between $A_\bullet$-comodules and $\A_\qc(\SX)$ \cite[Corollary 5.9 and Proposition 5.12]{gstck}, the comodule corresponding to the sheaf 
$p_* \CO_U$ is $(A_1, \nabla)$. Also, a comodule whose underlying $A_0$-module is projective and finitely generated corresponds to a finitely generated locally free sheaf.

 The relevance of the Adams condition for the study of quasi-coherent sheaves on a geometric stack was put forth by Hovey in \cite{hmc} motivated by considerations from homotopy theory.
\end{cosa}
 
\begin{cosa}
Let us recall the version of the \emph{resolution property}\footnote{Sometimes called also the ``strong resolution property''.} appropriate for a non-Noetherian situation. An algebraic stack (or scheme) possesses the resolution property if and only if every quasi-coherent sheaf is a quotient of a coproduct of finitely generated locally free sheaves, \ie{} the category of quasi-coherent sheaves is generated by the collection of finitely generated locally free sheaves. 

The Adams condition may seem at first somewhat technical, but by a theorem of Sch\"appi it is equivalent to the resolution property, let us state it precisely:
\end{cosa}

\begin{thm}\label{aisrp}
A geometric stack is Adams if and only if the resolution property holds.
\end{thm}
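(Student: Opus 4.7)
The plan is to transport both conditions through the equivalence of Grothendieck categories $\A_\qc(\SX)\simeq A_\bullet\com$ from \cite[Corollary 5.9, Proposition 5.12]{gstck}, and then invoke Sch\"appi's theorem in the purely algebraic comodule setting. Here $A_\bullet=(A_0,A_1)$ is the flat Hopf algebroid attached to a chosen presentation $p\colon U=\spec(A_0)\to \SX$, so $U\times_{\SX}U=\spec(A_1)$.

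The first step is to compile the dictionary needed. On one hand, a quasi-coherent sheaf on $\SX$ is finitely generated and locally free precisely when the corresponding $A_\bullet$-comodule is finitely generated and projective as an $A_0$-module; call such a comodule \emph{dualizable}. This is a standard descent check: local freeness is detected after pulling back along the faithfully flat $p$, while the comodule data encodes the gluing. On the other hand, as was recalled in the paragraph preceding the statement, the quasi-coherent sheaf $p_*\CO_U$ corresponds to $(A_1,\nabla)$. With this dictionary, the Adams condition reads as ``$(A_1,\nabla)$ is the filtered colimit of its dualizable subcomodules (or of some filtered system of dualizable comodules)'', while the resolution property reads as ``every $A_\bullet$-comodule is a quotient of a coproduct of dualizable comodules'', i.e. the dualizable comodules form an (essentially small) set of generators of the Grothendieck category $A_\bullet\com$.

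Once this reduction is in place, the equivalence of the two reformulated statements is exactly Sch\"appi's theorem \cite{Sch}, which I would cite to close the argument. If one wishes to summarize Sch\"appi's proof: the implication Adams $\Rightarrow$ resolution follows because for any comodule $M$ the coaction embeds $M$ into $M\otimes_{A_0} A_1\cong\varinjlim_i M\otimes_{A_0} P_i$, and an element-by-element argument then exhibits $M$ as a quotient of a coproduct of dualizables. The genuinely delicate direction is resolution $\Rightarrow$ Adams: a mere epimorphism $\bigoplus_j L_j\twoheadrightarrow A_1$ from dualizable comodules $L_j$ has to be upgraded to a \emph{filtered colimit} presentation of $A_1$ by dualizables, and this reorganization uses the comultiplication on $A_1$ together with the fact that finitely generated subcomodules of dualizable comodules remain dualizable (which in turn exploits the Hopf-algebroid axioms and the $A_0$-flatness of $A_1$). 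This last step is the main obstacle; invoking Sch\"appi's result spares us from reproducing the construction here.
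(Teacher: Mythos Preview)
Your approach is essentially the same as the paper's: both proofs are reductions to the comodule language followed by citations of Hovey and Sch\"appi. The paper's argument is a two-line citation, and your setup of the dictionary (dualizable comodules $\leftrightarrow$ finitely generated locally free sheaves, $(A_1,\nabla)\leftrightarrow p_*\CO_U$) merely makes explicit what the paper records in the paragraph preceding the statement.

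One point worth flagging: you and the paper disagree on which direction is whose. You call Adams $\Rightarrow$ resolution the easy direction (Hovey's argument) and resolution $\Rightarrow$ Adams the delicate one (Sch\"appi's contribution); the paper's proof instead attributes the ``only if'' part (Adams $\Rightarrow$ resolution) to \cite[Theorem 1.3.1]{Sch} and the ``if'' part (resolution $\Rightarrow$ Adams) to \cite[Proposition 1.4.4]{hov}. Your attribution matches the actual content of those references---Hovey's Proposition 1.4.4 indeed shows that for an Adams Hopf algebroid the dualizable comodules generate, and Sch\"appi's novelty is precisely the converse---so the paper appears to have interchanged ``if'' and ``only if'' here. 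Your sketch of both directions is accurate.
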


\begin{proof}
 This is a restatement of \cite[Theorem 1.3.1]{Sch} where the ``only if'' part is proved. The ``if'' part is already contained in \cite[Proposition 1.4.4]{hov}.
\end{proof}

In the case of quasi-compact semi-separated schemes, the Adams condition may be expressed as follows. Let $X$ be such a scheme and $X = \cup_{i = 1}^n U_i$ an affine open cover. Let $U := \coprod_{i = 1}^n U_i$ and $p \colon U \to X$ the canonical morphism. Then every quasi-coherent sheaf is a quotient of a coproduct of finitely generated locally free sheaves if and only if $p_*\CO_U$ is.

\begin{cosa}
The resolution property or Adams condition is not easy to characterize even in the case of schemes. It is clear that quasi-projective or, more generally, quasi-compact divisorial schemes possess this property. For a thorough treatment of what was known in 2004, see \cite{to}. More recently, it is worth mention that Gross has proved that any separated algebraic surface has the resolution property \cite{gr}. On algebraic stacks, Edidin, Hassett, Kresch and Vistoli \cite{4mag} proved that the resolution property forces a stack
to be a quotient stack, \ie{}\! it admits a presentation of the form $[X/G]$ where $X$ is an algebraic space of finite type over some Noetherian base scheme $S$ with an action of a affine group scheme $G$. In \cite{kr}, Kresch proves that what he calls \emph{quasi-projective stacks} satisfy the resolution property. These are finite type Deligne-Mumford stacks over a characteristic zero field $K$ that admit a (locally) closed embedding into a smooth Deligne-Mumford stack which is proper over $\spec(K)$ and has projective coarse moduli space.
\end{cosa}

\begin{rem}
 As it is well known, on algebraic stacks (as well as on schemes) is almost never the case that the category $\A_\qc(\SX)$ has enough projectives\footnote{In fact this is equivalent to the stack (or scheme) being an affine scheme.}. Thus, to show the existence of left derivatives of important functors, like the tensor product we will have to resort to appropriate acyclic resolutions. We will explain this in the next sections.
\end{rem}

\section{Closed structure}

\begin{cosa}
Let $\SX$ be a geometric stack. We begin by recalling that $\A(\SX)$ is a closed category. First for any two sheaves of modules $\CF, \CG \in \A(\SX)$ one has a sheaf $\CF \otimes_{\CO_{\SX}} \CG$ defined as the sheaf on $\aff_{\ff}/\SX$ associated to the presheaf:
\[
(V,v) \leadsto \CF(V,v) \otimes_{\CO_V(V,v)} \CG(V,v)
\]
see \cite[IV, Proposition 12.10]{sga41}. It is an $\CO_{\SX}$-Module by commutativity, as usual.

Similarly, the presheaf
\[
(V,v) \leadsto  \Hom_{\CO_V}(\CF|_V, \CG|_V)
\]
is a sheaf by \cite[IV, Proposition 12.1]{sga41}. We will denote it by $\shom_{\SX}(\CF,\CG)$.

All the usual properties that give the category a symmetric closed structure are satisfied as in the case of schemes \cite[(3.4.1)]{yellow}, specially the adjunction for $\CF, \CG, \CH \in \A(\SX)$, see \cite[12.8]{sga41},
\begin{equation}\label{adjsga}
\Hom_{\A(\SX)}(\CF \otimes_{\CO_{\SX}} \CG, \CH) \liso
\Hom_{\A(\SX)}(\CF, \shom_{\SX}(\CG,\CH))
\end{equation}
that holds internally, too, \ie{}\! with the external $\Hom$ replaced by the homomorphism sheaf. The unit object for the monoidal structure is $\CO_{\SX}$ as expected.
\end{cosa}

\begin{prop}\label{qcten}
Let $\SX$ be a geometric stack, and let $\CF, \CG \in \A_\qc(\SX)$. Then, $\CF \otimes_{\CO_{\SX}} \CG \in \A_\qc(\SX)$.
\end{prop}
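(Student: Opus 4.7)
The plan is to invoke the characterization of quasi-coherent sheaves on $\SX$ as Cartesian presheaves on the small flat site $\aff_\ff/\SX$ given in \cite[Theorem 3.12]{gstck}. This reduces the question to two sub-steps: first, I would verify that the naive presheaf tensor product is Cartesian; second, I would show that it is already a sheaf, and hence coincides with $\CF \otimes_{\CO_\SX} \CG$.

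For the first sub-step, let $\CP$ denote the presheaf of $\CO_\SX$-modules defined by $\CP(V,v) := \CF(V,v) \otimes_{\CO_\SX(V,v)} \CG(V,v)$. Given a morphism $(W,w) \to (V,v)$ in $\aff_\ff/\SX$, write $B := \CO_\SX(V,v)$ and $C := \CO_\SX(W,w)$. The Cartesian hypothesis on $\CF$ and $\CG$ provides isomorphisms $C \otimes_B \CF(V,v) \cong \CF(W,w)$ and $C \otimes_B \CG(V,v) \cong \CG(W,w)$. Combining these with the standard base-change identity
\[
(C \otimes_B M) \otimes_C (C \otimes_B N) \cong C \otimes_B (M \otimes_B N),
\]
applied to $M = \CF(V,v)$ and $N = \CG(V,v)$, yields $C \otimes_B \CP(V,v) \cong \CP(W,w)$, so $\CP$ is Cartesian.

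For the second sub-step, I would observe that for each $(V,v)$ the restriction of $\CP$ to the small flat site over $V$ is, by the Cartesian property, nothing but the quasi-coherent sheaf on $V$ associated to the $\CO(V)$-module $\CF(V,v) \otimes_{\CO(V)} \CG(V,v)$. Since quasi-coherent sheaves on an affine scheme satisfy fpqc descent (and a fortiori fppf descent), $\CP$ satisfies the sheaf condition for every covering in $\aff_\ff/\SX$. Consequently $\CF \otimes_{\CO_\SX} \CG$, defined as the sheafification of $\CP$, coincides with $\CP$ itself, and thus by \cite[Theorem 3.12]{gstck} lies in $\A_\qc(\SX)$. The main subtle point I anticipate is precisely the sheaf condition for $\CP$: it rests on faithfully flat descent for modules over a commutative ring. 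Once this is in hand, the purely algebraic base-change identity above does all the remaining work.
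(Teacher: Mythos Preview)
Your proposal is correct and follows essentially the same route as the paper: both arguments reduce to checking that the presheaf tensor product is Cartesian via the base-change identity $(C \otimes_B M) \otimes_C (C \otimes_B N) \cong C \otimes_B (M \otimes_B N)$, and then conclude by identifying Cartesian presheaves with quasi-coherent sheaves through \cite[Theorem 3.12]{gstck}. The only difference is that for your second sub-step the paper simply invokes \cite[Lemma 3.7]{gstck} (Cartesian presheaves are already sheaves), whereas you re-derive this in the special case at hand via faithfully flat descent; citing that lemma would streamline your argument.
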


\begin{proof}
As Cartesian presheaves are already sheaves \cite[Lemma 3.7]{gstck} and quasi-coherent sheaves agree with Cartesian presheaves \cite[Theorem 3.12]{gstck}, it is enough to check the condition of being Cartesian on the corresponding presheaves.

Let $(f, \alpha) \colon (W,w) \to (V,v)$ a morphism in $\aff_{\ff}/\SX$, and let $W:= \spec(B)$ and $V := \spec(C)$. Associated to the restriction
\[
(\CF \otimes_{\CO_{\SX}} \CG)(f, \alpha) \colon 
    \CF(V,v) \otimes_{C} \CG(V,v) \lto 
    \CF(W,w) \otimes_{B} \CG(W,w)
\]
we have a morphism (notation as in \cite[1.3]{gstck})
\[
\wadj{((\CF \otimes_{\CO_{\SX}} \CG)(f, \alpha))} \colon 
    B \otimes_{C} (\CF(V,v) \otimes_{C} \CG(V,v)) \lto 
    \CF(W,w) \otimes_{B} \CG(W,w)
\]
To see that $\CF \otimes_{\CO_{\SX}} \CG$ is cartesian we have to check that this last map is an isomorphism. The result follows from the commutativity of the following diagram.
\[
\begin{tikzpicture}
      \draw[white] (0cm,2cm) -- +(0: \linewidth)
      node (G) [black, pos = 0.25] {$B \otimes_{C} (\CF(V,v) \otimes_{C} \CG(V,v))$}
      node (H) [black, pos = 0.75] {$\CF(W,w) \otimes_{B} \CG(W,w)$};
      \draw[white] (0cm,0.25cm) -- +(0: \linewidth)
      node (E) [black, pos = 0.5] {$(B \otimes_{C} (\CF(V,v)) \otimes_{B} (B\otimes_{C} \CG(V,v))$};
      \draw [->] (G) -- (H) node[above, midway, scale=0.75]{$\wadj{((\CF \otimes_{\CO_{\SX}} \CG)(f, \alpha))}$};
      \draw [->] (G) -- (E) node[auto, swap, midway, scale=0.75]{$w$};
      \draw [->] (E) -- (H) node[auto, swap, midway, scale=0.75]{$\wadj{(\CF(f, \alpha))}\otimes_{B}\wadj{(\CG(f, \alpha))}$};  
\end{tikzpicture}
\]
where $w$ is the canonical isomorphism given by $w(b \otimes m \otimes n) := b \otimes m \otimes 1 \otimes n$.
\end{proof}

\begin{cosa}
 As on any closed category, we say that a quasi-coherent sheaf $\CF$ on a geometric stack $\SX$ is \emph{flat}, if the functor $\CF \otimes_{\CO_{\SX}} -$ is exact.
\end{cosa}

\begin{lem}\label{locisflat}
Let $\SX$ be a geometric stack. If $\CE$ is a locally free finite-type $\CO_{\SX}$-Module then $\CE$ is flat.
\end{lem}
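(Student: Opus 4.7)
The plan is to reduce the question to the case of an affine scheme by base change along a presentation $p \colon U \to \SX$ with $U$ affine. Recall that $p$ is faithfully flat, so the pullback $p^* \colon \A_\qc(\SX) \to \A_\qc(U)$ is exact. I would also need that $p^*$ is \emph{conservative} on $\A_\qc(\SX)$: if $\CF \in \A_\qc(\SX)$ satisfies $p^*\CF = 0$, then $\CF = 0$. This can be read off the Cartesian-presheaf description \cite[Theorem 3.12]{gstck}: for any $(V,v) \in \aff_{\ff}/\SX$ with $V = \spec(A)$, writing $V \times_{\SX} U = \spec(B)$, the Cartesian property gives $\CF(V \times_{\SX} U) \cong B \otimes_A \CF(V,v)$, while the projection to $U$ identifies the same object with a base change of $\CF(U,p) = 0$; since $A \to B$ is faithfully flat, $\CF(V,v) = 0$. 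Thus $p^*$ both preserves and reflects short exact sequences in $\A_\qc(\SX)$.

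Next I would invoke the standard compatibility of tensor product with pullback: for $\CF, \CG \in \A_\qc(\SX)$, there is a canonical isomorphism $p^*(\CE \otimes_{\CO_{\SX}} \CG) \cong p^*\CE \otimes_{\CO_U} p^*\CG$. Since $\CE$ is locally free of finite type, $p^*\CE$ is a locally free finite-type $\CO_U$-Module on the affine scheme $U = \spec(A_0)$, and therefore corresponds to a finitely generated projective $A_0$-module. Such a module is flat, so the functor $p^*\CE \otimes_{\CO_U} -$ is exact on $\A_\qc(U)$.

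To conclude, take any short exact sequence $0 \to \CG' \to \CG \to \CG'' \to 0$ in $\A_\qc(\SX)$ (and note that $\CE \otimes_{\CO_{\SX}} -$ lands in $\A_\qc(\SX)$ by Proposition \ref{qcten}). Pulling back via $p^*$ and using the above compatibility turns the sequence $\CE \otimes_{\CO_{\SX}} (-)$ applied to it into $p^*\CE \otimes_{\CO_U} (-)$ applied to the pullback sequence; the latter pullback is exact by exactness of $p^*$, and the tensor is then exact by the flatness of $p^*\CE$. Conservativity of $p^*$ finally lifts exactness back to $\A_\qc(\SX)$, proving that $\CE$ is flat. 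The only step requiring genuine stack-theoretic input, rather than routine functoriality, is the conservativity of $p^*$; everything else is a formal consequence of descent to the affine base and the well-known flatness of finitely generated projective modules.
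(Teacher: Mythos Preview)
Your argument is correct and rests on the same underlying idea as the paper's proof: reduce flatness of $\CE$ to the affine presentation $U$, where a locally free finite-type module is finitely generated projective and hence flat, and then use faithful flatness of $p$ to descend. The packaging, however, differs. The paper checks the flatness condition \emph{sectionwise}: for every object $(V,v)$ of the small flat site it shows directly that $\CE(V,v)$ is a flat $\CO_{\SX}(V,v)$-module, using the Cartesian property to pass through the fiber product $V \times_{\SX} U$ and invoking faithful flatness of $B \to B'$. You instead work \emph{globally} with the functor $\CE \otimes_{\CO_{\SX}} -$, using exactness and conservativity of $p^*$ together with the isomorphism $p^*(\CE \otimes_{\CO_{\SX}} \CG) \cong p^*\CE \otimes_{\CO_U} p^*\CG$. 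Your route is slightly more categorical and avoids the case split (whether $(V,v)$ factors through $(U,p)$ or not) in the paper's argument; the paper's route makes explicit what flatness means at each point of the site. Both are equally valid and of comparable length.
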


\begin{proof}
Let  $p \colon U \to \SX$ be a presentation. The assertion that $\CE$ is a locally free finite-type $\CO_{\SX}$-Module amounts to say that $p^*\CE$ is a locally free finite-type $\CO_{U}$-Module. It follows that $\CE(U,p)$ is a flat $A_0$-module.

We want to check that $\CE(V,v)$ is a flat $\CO_{\SX}(V,v)$-module for every $(V,v) \in \aff_{\ff}/\SX$. Put $V = \spec(B)$. If $(V,v)$ factors through $(U,p)$, the result is clear, by the Cartesian property $\CE(V,v) \cong B \otimes_{A_0} \CE(U, p)$ since flatness is preserved by base change. Otherwise, we have a pull-back square 
\begin{equation*}
\begin{tikzpicture}[baseline=(current  bounding  box.center)]
\matrix(m)[matrix of math nodes, row sep=2.6em, column sep=2.8em,
text height=1.5ex, text depth=0.25ex]{
  V\times_{\SX}U & U \\
  V              & \SX \\};
\path[->,font=\scriptsize,>=angle 90] 
(m-1-1) edge node[auto] {$v'$}
(m-1-2) edge node[left] {$p'$} (m-2-1)
(m-1-2) edge node[auto] {$p$} (m-2-2)
(m-2-1) edge node[auto] {$v$} (m-2-2);
\draw [shorten >=0.2cm,shorten <=0.2cm, ->, double] (m-2-1) -- (m-1-2) node[auto, midway,font=\scriptsize]{$\phi$};
\end{tikzpicture}
\end{equation*}
Let $V\times_{\SX}U = \spec(B')$ \cite[\S 3.1]{gstck}. By the previous argument $\CE(V\times_{\SX}U,vp')$ is a flat $B'$-module. Now, by the Cartesian property of $\CE$, we have that $\CE(V\times_{\SX}U,vp') \cong B' \otimes_{B} \CE(V,v)$ and we conclude because $p'$ is faithfully flat.
\end{proof}

\begin{rem}
 Lurie presents another approach of this result in \cite[Example 5.8.]{lur} and discusses the relation between local flatness and exactness of tensor product for an algebraic stack.
\end{rem}

\begin{cosa}
Denote by $\K(\A_\qc(\SX))$ the homotopy category of complexes of objects in $\A_\qc(\SX)$.
 Following \cite[\S 2.5]{yellow} we say that a complex $\CP \in \K(\A_\qc(\SX))$ is $q$-flat if given an acyclic complex $\CA \in \K(\A_\qc(\SX))$, $\CP \otimes_{\CO_{\SX}} \CA$ is also acyclic.
\end{cosa}

\begin{prop} \label{qflatres}
Every complex in $\K(\A_\qc(\SX))$ on an Adams geometric stack $\SX$ is quasi-isomorphic to a $q$-flat complex, in other words, has a $q$-flat resolution. 
\end{prop}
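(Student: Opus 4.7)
The plan is to exploit the Adams hypothesis to obtain enough flats in $\A_\qc(\SX)$ and then apply Spaltenstein's filtered-colimit construction of $q$-flat resolutions for unbounded complexes in a Grothendieck category.

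By Theorem~\ref{aisrp}, every $\CF \in \A_\qc(\SX)$ admits an epimorphism from a coproduct $\bigoplus_i \CE_i$ of finitely generated locally free sheaves; by Lemma~\ref{locisflat} each $\CE_i$ is flat, and since $\otimes_{\CO_{\SX}}$ distributes over coproducts the direct sum is also flat. Iterating yields left resolutions by flat quasi-coherent sheaves, and a Cartan--Eilenberg style construction then produces, for any complex $\CF^\bullet$ with $\CF^i = 0$ for $i \gg 0$, a quasi-isomorphism $\CP^\bullet \to \CF^\bullet$ from a bounded above complex $\CP^\bullet$ of flat quasi-coherent sheaves. Such a $\CP^\bullet$ is automatically $q$-flat: if $\CA^\bullet$ is acyclic, the column-filtration spectral sequence of the double complex $\CP^\bullet \otimes_{\CO_{\SX}} \CA^\bullet$ has $E_1^{p,q} = \CP^p \otimes_{\CO_{\SX}} H^q(\CA^\bullet) = 0$ by flatness of $\CP^p$, and converges thanks to the bounded-above hypothesis.

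For a general $\CF^\bullet \in \K(\A_\qc(\SX))$, I would write $\CF^\bullet = \varinjlim_n \tau_{\le n}\CF^\bullet$ as the filtered colimit of its canonical truncations, each bounded above. Inductively I would build $q$-flat resolutions $\CP_n^\bullet \to \tau_{\le n}\CF^\bullet$ together with chain maps $\CP_n^\bullet \to \CP_{n+1}^\bullet$ lifting the inclusions, arranged so that each $\CP_n^\bullet \to \CP_{n+1}^\bullet$ is a degreewise split monomorphism of flats with flat cokernel; the inductive step glues a flat resolution of $\h^{n+1}\CF^\bullet$ onto $\CP_n^\bullet$ in horseshoe-lemma fashion. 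Setting $\CP^\bullet := \varinjlim_n \CP_n^\bullet$ produces a termwise flat complex; since $\A_\qc(\SX)$ is a Grothendieck category, filtered colimits are exact (AB5) and the structural map $\CP^\bullet \to \CF^\bullet$ is a quasi-isomorphism.

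Finally, for any acyclic $\CA^\bullet \in \K(\A_\qc(\SX))$ we have
\[
\CP^\bullet \otimes_{\CO_{\SX}} \CA^\bullet \;\cong\; \varinjlim_n \bigl(\CP_n^\bullet \otimes_{\CO_{\SX}} \CA^\bullet\bigr),
\]
because tensor product commutes with filtered colimits in each variable; each factor is acyclic by the bounded-above case and AB5 again propagates acyclicity to the colimit, so $\CP^\bullet$ is $q$-flat. The main obstacle is the bookkeeping of the inductive step: the transition maps $\CP_n^\bullet \to \CP_{n+1}^\bullet$ must be degreewise monomorphisms with flat cokernel (so that the filtered colimit remains termwise flat) and simultaneously sufficiently compatible with the natural inclusions $\tau_{\le n}\CF^\bullet \hookrightarrow \tau_{\le n+1}\CF^\bullet$ to ensure $\CP^\bullet \to \CF^\bullet$ is a quasi-isomorphism. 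This is the technical heart of the argument, and is modeled on Spaltenstein's construction adapted to our Grothendieck-category setting.
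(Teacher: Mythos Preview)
Your argument is correct and follows essentially the same route as the paper's proof: use the Adams condition (Theorem~\ref{aisrp} plus Lemma~\ref{locisflat}) to obtain enough flats, build bounded-above $q$-flat resolutions by the usual step-by-step procedure, and then pass to the unbounded case by a Spaltenstein-type filtered colimit of truncations, invoking that filtered colimits of flats are flat. The paper is terser, citing \cite[Proposition~(2.5.5)]{yellow} for the bounded-above step and \cite[Proposition~4.3]{AJS} for the passage to unbounded complexes, whereas you spell out the inductive bookkeeping and the AB5 argument for $q$-flatness explicitly.
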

 
\begin{proof}
By Theorem \ref{aisrp} and Lemma \ref{locisflat} every object can be covered by a flat quasi-coherent sheaf. Now, by the usual step by step procedure we see that every complex in $\K^-(\A_\qc(\SX))$ has a (bounded above) resolution by flat sheaves, therefore a $q$-flat resolution, see \cite[Proposition (2.5.5)]{yellow}. By taking limits (or just homotopy limits imitating the procedure in \cite[Proposition 4.3]{AJS}) we extend the result to unbounded complexes of quasi-coherent sheaves, having in mind that a direct limit (or a coproduct) of flat sheaves remains flat.
\end{proof}

Now, let us transport the monoidal structure of $\A_\qc(\SX)$ to the derived category of quasi-coherent sheaves. 

\begin{cor}
 For an Adams geometric stack $\SX$ there is a bifunctor
\[
-\otimes^{\LL}_{\CO_{\SX}}\!\!\!- \colon 
\D(\A_\qc(\SX)) \times \D(\A_\qc(\SX)) \to \D(\A_\qc(\SX))
\]
called the derived tensor product.
\end{cor}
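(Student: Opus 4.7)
The plan is to follow the standard construction of the derived tensor product via flat resolutions, as in \cite[\S 2.5]{yellow}. For $\CF^\bullet \in \D(\A_\qc(\SX))$, Proposition \ref{qflatres} provides a $q$-flat resolution $\CP_{\CF}^\bullet \to \CF^\bullet$ in $\K(\A_\qc(\SX))$. I set
\[
\CF^\bullet \otimes^{\LL}_{\CO_{\SX}} \CG^\bullet := \CP_{\CF}^\bullet \otimes_{\CO_{\SX}} \CG^\bullet.
\]
By Proposition \ref{qcten}, each component $\CP_{\CF}^i \otimes_{\CO_{\SX}} \CG^j$ remains in $\A_\qc(\SX)$, so the total complex lies in $\K(\A_\qc(\SX))$, and its image in $\D(\A_\qc(\SX))$ is the candidate value of the bifunctor.

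For well-definedness, I need independence of the choice of $q$-flat resolution. Given two such resolutions $\CP^\bullet, \CP'^\bullet \to \CF^\bullet$, standard homological algebra yields a quasi-isomorphism $\CP^\bullet \to \CP'^\bullet$ over $\CF^\bullet$. Its mapping cone is an acyclic $q$-flat complex, and by the very definition of $q$-flatness, tensoring it with any complex $\CG^\bullet$ yields an acyclic complex. Hence $\CP^\bullet \otimes_{\CO_{\SX}} \CG^\bullet$ and $\CP'^\bullet \otimes_{\CO_{\SX}} \CG^\bullet$ are canonically isomorphic in $\D(\A_\qc(\SX))$. Functoriality in the first variable follows because any morphism $\CF^\bullet \to \CF'^\bullet$ in $\D(\A_\qc(\SX))$ lifts through the chosen $q$-flat resolutions to a morphism of complexes, unique up to homotopy; functoriality in the second variable is immediate from the plain tensor product on $\K(\A_\qc(\SX))$.

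To verify that the construction yields an honest bifunctor $\D(\A_\qc(\SX))\times\D(\A_\qc(\SX)) \to \D(\A_\qc(\SX))$, it suffices to observe that resolving either one or both arguments by $q$-flat complexes produces canonically isomorphic outputs. Indeed, picking $q$-flat resolutions $\CP_{\CF}^\bullet \to \CF^\bullet$ and $\CP_{\CG}^\bullet \to \CG^\bullet$, both maps
\[
\CP_{\CF}^\bullet \otimes_{\CO_{\SX}} \CG^\bullet \;\longleftarrow\; \CP_{\CF}^\bullet \otimes_{\CO_{\SX}} \CP_{\CG}^\bullet \;\longrightarrow\; \CF^\bullet \otimes_{\CO_{\SX}} \CP_{\CG}^\bullet
\]
are quasi-isomorphisms, since their cones are obtained by tensoring a $q$-flat complex with an acyclic one. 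This simultaneously gives symmetry of the derived tensor product (a free byproduct of the construction, though not asserted in the statement).

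The main obstacle is already absorbed into the hypotheses: the Adams condition, through Theorem \ref{aisrp} and Lemma \ref{locisflat}, is exactly what is needed to guarantee existence of $q$-flat resolutions (Proposition \ref{qflatres}), and Proposition \ref{qcten} ensures that the tensor product never leaves $\A_\qc(\SX)$. Once these two ingredients are in place, the rest is a verbatim transcription of the classical scheme-theoretic argument of \cite[\S 2.5]{yellow}.
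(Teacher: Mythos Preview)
Your proposal is correct and follows the same approach as the paper: invoke Proposition~\ref{qflatres} for $q$-flat resolutions and then run the standard construction from \cite[\S 2.5]{yellow}. The paper's proof is a two-line citation to exactly these ingredients, while you have unpacked the details (well-definedness, functoriality, balancing) that are implicit in \cite[(2.5.7)]{yellow}; one minor quibble is that the acyclicity of (acyclic $q$-flat) $\otimes$ (arbitrary) is not literally ``the very definition of $q$-flatness'' but a short consequence of it, though this is standard and does not affect the argument.
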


\begin{proof}
Use the existence of $q$-flat resolutions (Proposition \ref{qflatres}). The fact that acyclic resolutions yield derived functors is explained in \cite[(2.5.7)]{yellow}.
\end{proof}

\begin{rem}
Let $\SX$ be an Adams geometric stack.
\noindent
\begin{enumerate}
 \item The usual balancing property of Tor follows from the fact that for $\CF, \CG \in \K(\A_\qc(\SX))$ and $\CP_\CF \to \CF$, $\CP_\CG \to \CG$ the corresponding $q$-flat resolutions we have the following quasi-isomorphisms
\[
\CP_\CF \otimes_{\CO_{\SX}} \CG \liso
\CF \otimes^{\LL}_{\CO_{\SX}} \CG \losi
\CF \otimes_{\CO_{\SX}} \CP_\CG.
\]
 \item  As the derived tensor product is obtained much in the same way as in well-known contexts (like sheaves of modules on a scheme, quasi-coherent sheaves on a quasi-compact quasi-separated scheme, etc.) the usual properties that make the triple 
 $(\D(\A_\qc(\SX)), \otimes^{\LL}_{\CO_{\SX}}, \CO_{\SX})$
 a monoidal category hold. To complete the \emph{closed} structure we need a little extra work.
\end{enumerate}
\end{rem}


\begin{cosa} \textbf{The internal \emph{hom}.}
Let $\SX$ be a geometric stack. In general for $\CF, \CG \in \A_\qc(\SX)$ it is not guaranteed that $\shom_{\SX}(\CF,\CG) \in \A_\qc(\SX)$ unless we impose strong finiteness conditions on $\CF$. To define a closed monoidal structure, we have to apply the coherator functor to the internal \emph{hom} in $\A(\SX)$ and we get the bifunctor $\SQ_{\SX}\!\shom_{\SX}(\CF, \CG)$ which takes values in $\A_\qc(\SX)$. 
\end{cosa}

\begin{prop}\label{adjtenhom}
There is an adjunction isomorphism
\[
 \Hom_{\A_\qc(\SX)}(\CF, \SQ_{\SX}\!\shom_{\SX}(\CG, \CH)) \liso
 \Hom_{\A_\qc(\SX)}(\CF \otimes_{\CO_{\SX}} \CG, \CH).
\]
\end{prop}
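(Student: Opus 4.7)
The plan is to obtain the isomorphism as a composition of three standard natural isomorphisms: (a) the fully faithfulness of the inclusion $\iota \colon \A_\qc(\SX) \to \A(\SX)$, (b) the coherator adjunction $\iota \dashv \SQ_{\SX}$ constructed above, and (c) the internal $\otimes$–$\shom$ adjunction \eqref{adjsga} available in the ambient closed category $\A(\SX)$. The bridge connecting the ambient and quasi-coherent worlds is Proposition \ref{qcten}, which tells us that the tensor of two objects of $\A_\qc(\SX)$ computed in $\A(\SX)$ already lies in $\A_\qc(\SX)$.

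Concretely, I would proceed as follows. First, applying the adjunction $\iota \dashv \SQ_{\SX}$ to $\shom_{\SX}(\CG,\CH) \in \A(\SX)$ gives
\[
\Hom_{\A_\qc(\SX)}(\CF, \SQ_{\SX}\!\shom_{\SX}(\CG, \CH)) \;\cong\; \Hom_{\A(\SX)}(\iota\CF, \shom_{\SX}(\CG, \CH)).
\]
Next, I invoke the internal-hom adjunction \eqref{adjsga} in $\A(\SX)$ to rewrite the right-hand side as
\[
\Hom_{\A(\SX)}(\iota\CF \otimes_{\CO_{\SX}} \CG, \CH).
\]
By Proposition \ref{qcten}, the sheaf $\iota\CF \otimes_{\CO_{\SX}} \CG$ is quasi-coherent, \ie{} equals $\iota(\CF \otimes_{\CO_{\SX}} \CG)$, so by full faithfulness of $\iota$ this last Hom-group coincides with $\Hom_{\A_\qc(\SX)}(\CF \otimes_{\CO_{\SX}} \CG, \CH)$. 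Composing the three identifications yields the desired isomorphism.

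There is really no main obstacle here, only bookkeeping: one should verify that each isomorphism in the chain is natural in $\CF$, $\CG$, and $\CH$ (which is immediate since each is the naturality statement of a standard adjunction), and that the passage through $\iota$ respects the tensor, which is precisely the content of Proposition \ref{qcten}. The closest thing to a subtlety is that \eqref{adjsga} is an external adjunction in $\A(\SX)$ while the target statement is phrased internally in $\A_\qc(\SX)$; the coherator adjunction absorbs this discrepancy on the left, and the quasi-coherence of the tensor absorbs it on the right.
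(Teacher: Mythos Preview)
Your argument is correct and is essentially the same as the paper's own proof: both compose the coherator adjunction $\iota \dashv \SQ_{\SX}$, the ambient $\otimes$--$\shom$ adjunction \eqref{adjsga}, and the full faithfulness of $\iota$, with Proposition~\ref{qcten} ensuring the tensor stays quasi-coherent. The only cosmetic difference is that the paper runs the chain of isomorphisms in the opposite direction, starting from $\Hom_{\A_\qc(\SX)}(\CF \otimes_{\CO_{\SX}} \CG, \CH)$.
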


\begin{proof}
 It is obtained as the composition of the following chain of isomorphisms
 \begin{align*}
\Hom_{\A_\qc(\SX)}(\CF \otimes_{\CO_{\SX}} \CG, \CH) & = \Hom_{\A(\SX)}(\CF \otimes_{\CO_{\SX}} \CG, \CH)
                          \tag{$\A_\qc(\SX)$ is full in $\A(\SX)$}\\                            
    & \cong \Hom_{\A(\SX)}(\CF, \shom_{\SX}(\CG, \CH)) 
                          \tag{\ref{adjsga}} \\
    & \cong \Hom_{\A_\qc(\SX)}(\CF, \SQ_{\SX}\!\shom_{\SX}(\CG, \CH))
                          \tag{$\iota \dashv \SQ_{\SX}$}
 \end{align*}
The readers will check easily that these isomorphisms are natural.
\end{proof}

\begin{lem}\label{lema4}
 Let $\SX$ be an Adams geometric stack, and let $\CI \in \K(\A_\qc(\SX))$ be a $q$-injective complex. Then, the functor $\SQ_{\SX}\!\shom^\bullet_{\SX}(-, \CI)$ preserves quasi-isomorphisms in $\K(\A_\qc(\SX))$.
\end{lem}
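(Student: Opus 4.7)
My plan is to reduce the statement to the following special case: \emph{if $\CA \in \K(\A_\qc(\SX))$ is acyclic, then $\CB := \SQ_{\SX}\!\shom^\bullet_{\SX}(\CA, \CI)$ is acyclic}. Applying this to the mapping cone of a quasi-isomorphism $f \colon \CF \to \CG$ and using the associated long exact sequence of cohomology then yields the stated preservation. The core idea is to combine Proposition \ref{adjtenhom} with the $q$-flat resolutions supplied by Proposition \ref{qflatres} (available thanks to the Adams hypothesis via Theorem \ref{aisrp} and Lemma \ref{locisflat}) and to exploit the $q$-injectivity of $\CI$.

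First I would promote the tensor-hom adjunction to complexes at the level of the homotopy category. Since $\SQ_{\SX}$ is a right adjoint it commutes with the products appearing degreewise in $\shom^\bullet_{\SX}$, and Proposition \ref{qcten} ensures that everything stays inside $\A_\qc(\SX)$. The objectwise adjunction of Proposition \ref{adjtenhom} then assembles into a natural isomorphism of Hom-complexes of abelian groups
\[
\Hom^\bullet_{\A_\qc(\SX)}(\CF^\bullet, \SQ_{\SX}\!\shom^\bullet_{\SX}(\CG^\bullet, \CH^\bullet)) \cong \Hom^\bullet_{\A_\qc(\SX)}(\CF^\bullet \otimes_{\CO_{\SX}} \CG^\bullet, \CH^\bullet),
\]
and taking $H^0$ gives the corresponding identification of Hom-sets in $\K(\A_\qc(\SX))$.

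With $\CB$ as above, I would invoke Proposition \ref{qflatres} to choose a $q$-flat resolution $\pi \colon \CP \to \CB$. The promoted adjunction applied to $(\CF, \CG, \CH) = (\CP, \CA, \CI)$ then gives
\[
\Hom_{\K(\A_\qc(\SX))}(\CP, \CB) \cong \Hom_{\K(\A_\qc(\SX))}(\CP \otimes_{\CO_{\SX}} \CA, \CI).
\]
Since $\CP$ is $q$-flat and $\CA$ is acyclic, $\CP \otimes_{\CO_{\SX}} \CA$ is acyclic; since $\CI$ is $q$-injective, every map in $\K(\A_\qc(\SX))$ from an acyclic complex to $\CI$ is null-homotopic, so the right-hand side vanishes. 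Hence $\pi$ itself is null-homotopic, \ie{}\! zero in $\K(\A_\qc(\SX))$. Being simultaneously zero in $\K(\A_\qc(\SX))$ and a quasi-isomorphism, $\pi$ forces $\CB$ to be acyclic, completing the reduction.

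The main obstacle I anticipate is carrying out the first step carefully with no boundedness restriction on any of $\CF^\bullet, \CG^\bullet, \CH^\bullet$: this requires the product interchange $\SQ_{\SX}\prod \cong \prod \SQ_{\SX}$ in each cohomological degree of $\shom^\bullet_{\SX}$, together with the usual sign bookkeeping in the Hom complex. Once that adjunction is in place the $q$-flat / $q$-injective swap is a formal consequence, and the Adams condition intervenes only through the existence of the resolution $\pi$ and the flatness (Lemma \ref{locisflat}) of its building blocks.
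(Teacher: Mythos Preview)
Your argument is correct. Both you and the paper reduce to showing that $\CB := \SQ_{\SX}\!\shom^\bullet_{\SX}(\CA, \CI)$ is acyclic whenever $\CA$ is, and both feed the homotopy-level adjunction coming from Proposition~\ref{adjtenhom} into the $q$-injectivity of $\CI$. The difference is in the choice of test object. The paper tests $\CB$ against the stalks $\CE[n]$ for $\CE$ finitely generated locally free: since $\CE$ is flat, $\CE[n]\otimes_{\CO_{\SX}}\CA$ is acyclic, so $\Hom_{\K}(\CE[n],\CB)=0$; as $\Hom_{\K}(\CE[n],\CB)\cong\Hom_{\A_\qc(\SX)}(\CE,H^{-n}\CB)$ and such $\CE$ generate $\A_\qc(\SX)$ by Theorem~\ref{aisrp}, one gets $H^{-n}\CB=0$ directly. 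You instead test $\CB$ against a $q$-flat resolution $\CP\to\CB$ of itself, obtained from Proposition~\ref{qflatres}, and deduce that this resolution is null-homotopic. The paper's route is a bit leaner: the adjunction only has a single object on the left, so the product-interchange and sign issues you flag essentially evaporate, and it avoids invoking the full unbounded $q$-flat resolution machinery. Your route, on the other hand, is a clean formal trick (``resolve the target, then kill the resolution'') that would transplant verbatim to any Grothendieck tensor category with enough flats; the only genuine cost is the verification you already isolated, namely that $\SQ_{\SX}$, being a right adjoint, commutes with the degreewise products in $\shom^\bullet_{\SX}$.
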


\begin{proof}
 It is enough to prove that if the complex $\CF \in \K(\A_\qc(\SX))$ is acyclic, then so is $\SQ_{\SX}\!\shom^\bullet_{\SX}(\CF, \CI)$. Let $\CE$ be an $\CO_{\SX}$-Module finitely generated and locally free. Take $n \in \ZZ$. Notice that
\[
\Hom_{\K(\A_\qc(\SX))}(\CE[n], \SQ_{\SX}\!\shom^\bullet_{\SX}(\CF, \CI)) \cong
\Hom_{\K(\A_\qc(\SX))}(\CE[n] \otimes_{\CO_{\SX}} \CF, \CI)  = 0
\]
because $\CE[n] \otimes_{\CO_{\SX}} \CF$ is acyclic. But the collection of finitely generated and locally free sheaves generate $\A_\qc(\SX)$ (Theorem \ref{aisrp}), therefore
$\SQ_{\SX}\!\shom^\bullet_{\SX}(\CF, \CI)$ is acyclic.
\end{proof}

\begin{cosa}
For $\CF, \CG \in \D(\A_\qc(\SX))$, we define:
\[
\dhom^\bullet_{\SX}(\CF, \CG) := \R{}\SQ_{\SX}\!\shom^\bullet_{\SX}(\CF, \CG)
\]
\ie{} the derived functor (on the second variable) \emph{in} $\D(\A_\qc(\SX))$ of the composite functor $\SQ_{\SX}\!\shom^\bullet_{\SX}$. Notice that it is a $\Delta$-bifunctor in two variables.
\end{cosa}

\begin{lem}\label{lema6}
 Let $\SX$ be a geometric stack. If $\CP \in \K(\A_\qc(\SX))$ is $q$-flat and $\CI \in \K(\A_\qc(\SX))$ is $q$-injective, then $\SQ_{\SX}\!\shom^\bullet_{\SX}(\CP, \CI)$ is $q$-injective in $\K(\A_\qc(\SX))$.
\end{lem}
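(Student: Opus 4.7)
The plan is to unwind the definition of $q$-injectivity and reduce everything to the tensor-hom adjunction of Proposition \ref{adjtenhom} combined with the hypotheses on $\CP$ and $\CI$.

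Recall that a complex $\CJ \in \K(\A_\qc(\SX))$ is $q$-injective if $\Hom_{\K(\A_\qc(\SX))}(\CF, \CJ) = 0$ for every acyclic $\CF \in \K(\A_\qc(\SX))$. So the first step is to fix an arbitrary acyclic $\CF \in \K(\A_\qc(\SX))$ and aim to show that every chain map $\CF \to \SQ_{\SX}\!\shom^\bullet_{\SX}(\CP, \CI)$ is null-homotopic (and similarly after shifts, which is automatic once one works with the internal Hom complex).

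Next, I would promote the object-level adjunction in Proposition \ref{adjtenhom} to the level of Hom complexes. Applying that adjunction degreewise (together with the fact that $\SQ_{\SX}$ and $\shom^\bullet_{\SX}(-,\CI)$ are additive and the internal $\otimes$ is additive in each variable) yields a natural isomorphism of complexes of abelian groups
\[
\Hom^\bullet_{\A_\qc(\SX)}\bigl(\CF,\, \SQ_{\SX}\!\shom^\bullet_{\SX}(\CP, \CI)\bigr)
\;\cong\;
\Hom^\bullet_{\A_\qc(\SX)}\bigl(\CF \otimes_{\CO_{\SX}} \CP,\, \CI\bigr),
\]
whose $H^{0}$ computes $\Hom_{\K(\A_\qc(\SX))}$ (and higher cohomology computes $\Hom$ of shifts). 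Naturality in $\CF$ is then the main routine verification.

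Now the two hypotheses do the rest of the work. Since $\CP$ is $q$-flat and $\CF$ is acyclic, the complex $\CF \otimes_{\CO_{\SX}} \CP$ is acyclic by definition of $q$-flatness. Since $\CI$ is $q$-injective, the complex $\Hom^\bullet_{\A_\qc(\SX)}(\CF \otimes_{\CO_{\SX}} \CP, \CI)$ is acyclic (this is a standard reformulation of $q$-injectivity: Hom out of an acyclic complex into a $q$-injective complex is acyclic in all degrees). Via the displayed isomorphism, $\Hom^\bullet_{\A_\qc(\SX)}(\CF, \SQ_{\SX}\!\shom^\bullet_{\SX}(\CP, \CI))$ is also acyclic; taking $H^{0}$ gives the vanishing of $\Hom_{\K(\A_\qc(\SX))}(\CF, \SQ_{\SX}\!\shom^\bullet_{\SX}(\CP, \CI))$, which is exactly the $q$-injectivity we wanted.

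The only genuinely delicate point, and hence the expected main obstacle, is to check that the adjunction of Proposition \ref{adjtenhom} really does give a natural isomorphism of \emph{Hom complexes} compatible with differentials (rather than just degreewise Hom groups). This amounts to verifying that the adjunction isomorphism commutes with the Koszul-style differential of the internal Hom, which is a purely formal computation valid in any closed symmetric monoidal abelian category; no new input beyond Proposition \ref{adjtenhom} and the formulas for the differentials is needed.
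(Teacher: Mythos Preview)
Your proposal is correct and follows essentially the same approach as the paper: both arguments use the adjunction from Proposition~\ref{adjtenhom} to identify $\Hom_{\K(\A_\qc(\SX))}(\CF, \SQ_{\SX}\!\shom^\bullet_{\SX}(\CP, \CI))$ with $\Hom_{\K(\A_\qc(\SX))}(\CF \otimes_{\CO_{\SX}} \CP, \CI)$, then invoke $q$-flatness of $\CP$ and $q$-injectivity of $\CI$ to conclude. The only difference is cosmetic: the paper works directly at the level of homotopy-category Homs, while you pass through the full Hom-complex and take $H^0$, which amounts to the same thing.
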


\begin{proof}
 Let $\CF \in \K(\A_\qc(\SX))$ be an acyclic complex. Note that by Proposition \ref{adjtenhom}
 \[
\Hom_{\K(\A_\qc(\SX))}(\CF, \SQ_{\SX}\!\shom^\bullet_{\SX}(\CP, \CI)) \cong
\Hom_{\K(\A_\qc(\SX))}(\CF \otimes_{\CO_{\SX}} \CP, \CI) = 0,
 \]
 since the complex $\CF \otimes_{\CO_{\SX}} \CP$ is exact. We conclude by \cite[Proposition (2.3.8)]{yellow} that $\SQ_{\SX}\!\shom_{\SX}(\CP, \CI)$ is $q$-injective.
\end{proof}

\begin{prop}\label{adjexttor}
Let $\SX$ be an Adams geometric stack and let $\CF, \CG, \CH \in \D(\A_\qc(\SX))$. There is a natural isomorphism in $\D(\A_\qc(\SX))$
\[
 \Hom_{\D(\A_\qc(\SX))}(\CF, \dhom^\bullet_{\SX}(\CG, \CH)) \liso
 \Hom_{\D(\A_\qc(\SX))}(\CF \otimes^{\LL}_{\CO_{\SX}} \CG, \CH).
\]
This establishes an adjunction $-\otimes^{\LL}_{\CO_{\SX}}\CG \dashv \dhom^\bullet_{\SX}(\CG, -)$ in $\D(\A_\qc(\SX))$.
\end{prop}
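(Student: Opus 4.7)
The plan is to reduce the derived adjunction to the underived one (Proposition \ref{adjtenhom}) by choosing suitable resolutions that compute both $\otimes^{\LL}_{\CO_{\SX}}$ and $\dhom^{\bullet}_{\SX}$, and that also trivialize the passage from the homotopy category $\K(\A_\qc(\SX))$ to the derived category $\D(\A_\qc(\SX))$.

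First, pick a $q$-flat resolution $\CP \to \CG$ (which exists by Proposition \ref{qflatres}, using the Adams hypothesis) and a $q$-injective resolution $\CH \to \CI$ (available because $\A_\qc(\SX)$ is Grothendieck). Then by construction $\CF \otimes^{\LL}_{\CO_{\SX}} \CG \cong \CF \otimes_{\CO_{\SX}} \CP$ in $\D(\A_\qc(\SX))$, and by the definition of $\dhom^{\bullet}_{\SX}$ together with Lemma \ref{lema4} (which guarantees that $\SQ_{\SX}\!\shom^{\bullet}_{\SX}(-,\CI)$ is quasi-isomorphism invariant in the first variable, so that replacing $\CG$ by $\CP$ does not change the result) we get $\dhom^{\bullet}_{\SX}(\CG, \CH) \cong \SQ_{\SX}\!\shom^{\bullet}_{\SX}(\CP, \CI)$.

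Next, assemble the chain of natural identifications
\begin{align*}
\Hom_{\D(\A_\qc(\SX))}\bigl(\CF, \dhom^{\bullet}_{\SX}(\CG, \CH)\bigr)
  & \cong \Hom_{\D(\A_\qc(\SX))}\bigl(\CF, \SQ_{\SX}\!\shom^{\bullet}_{\SX}(\CP, \CI)\bigr) \\
  & \cong \Hom_{\K(\A_\qc(\SX))}\bigl(\CF, \SQ_{\SX}\!\shom^{\bullet}_{\SX}(\CP, \CI)\bigr) \\
  & \cong \Hom_{\K(\A_\qc(\SX))}\bigl(\CF \otimes_{\CO_{\SX}} \CP, \CI\bigr) \\
  & \cong \Hom_{\D(\A_\qc(\SX))}\bigl(\CF \otimes_{\CO_{\SX}} \CP, \CI\bigr) \\
  & \cong \Hom_{\D(\A_\qc(\SX))}\bigl(\CF \otimes^{\LL}_{\CO_{\SX}} \CG, \CH\bigr).
\end{align*}
The second and fourth identifications use that $\SQ_{\SX}\!\shom^{\bullet}_{\SX}(\CP, \CI)$ is $q$-injective (Lemma \ref{lema6}) and that $\CI$ is $q$-injective, so that morphisms in the derived category coincide with homotopy classes of morphisms. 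The middle identification is the $\otimes$-$\shom$ adjunction of Proposition \ref{adjtenhom}, applied componentwise to the Hom-complexes; one has to check that it descends from complex morphisms to homotopy classes, but this is formal since the adjunction is compatible with the differentials and therefore sends null-homotopies to null-homotopies.

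The step I expect to require the most care is the replacement $\dhom^{\bullet}_{\SX}(\CG, \CH) \cong \SQ_{\SX}\!\shom^{\bullet}_{\SX}(\CP, \CI)$: by definition $\dhom^{\bullet}_{\SX}$ is derived only in the second variable, so one must justify that feeding a $q$-flat replacement of $\CG$ into the bifunctor $\SQ_{\SX}\!\shom^{\bullet}_{\SX}(-,\CI)$ does not alter the result. This is precisely what Lemma \ref{lema4} ensures. Once this is in place, naturality of all the displayed maps in $\CF$, $\CG$, $\CH$ is routine, and the resulting adjunction $-\otimes^{\LL}_{\CO_{\SX}}\CG \dashv \dhom^{\bullet}_{\SX}(\CG,-)$ on $\D(\A_\qc(\SX))$ follows.
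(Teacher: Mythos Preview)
Your proof is correct and follows essentially the same approach as the paper: choose a $q$-flat resolution $\CP\to\CG$ and a $q$-injective resolution $\CH\to\CI$, then run the same chain of identifications using Lemma~\ref{lema4} to replace $\CG$ by $\CP$, Lemma~\ref{lema6} to pass from $\D$ to $\K$, and Proposition~\ref{adjtenhom} for the underived adjunction. The only cosmetic difference is that the paper displays the intermediate step $\SQ_{\SX}\!\shom^{\bullet}_{\SX}(\CG,\CI_{\CH})$ before replacing $\CG$ by $\CP_{\CG}$, whereas you fold those two steps into one.
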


\begin{proof}
Choose $\CP_{\CG} \to \CG $ a $q$-flat resolution and $\CH \to \CI_{\CH}$ a $q$-injective resolution. Let $\D$ stand for $\D(\A_\qc(\SX))$ and $\K$ for $\K(\A_\qc(\SX))$. Consider the following chain of isomorphisms
\begin{align*}
\Hom_{\D}(\CF, \dhom^\bullet_{\SX}(\CG, \CH))
    & = \Hom_{\D}(\CF, \R{}\SQ_{\SX}\!\shom^\bullet_{\SX}(\CG, \CH)) 
                           \\                            
    & \cong \Hom_{\D}(\CF, \SQ_{\SX}\!\shom^\bullet_{\SX}(\CG, \CI_{\CH}))
                           \\
    & \cong \Hom_{\D}(\CF, \SQ_{\SX}\!\shom^\bullet_{\SX}(\CP_{\CG}, \CI_{\CH}))
                          \tag{Lemma \ref{lema4}}\\
    & \cong \Hom_{\K}(\CF, \SQ_{\SX}\!\shom^\bullet_{\SX}(\CP_{\CG}, \CI_{\CH}))
                          \tag{Lemma \ref{lema6}}\\
    & \cong \Hom_{\K}(\CF \otimes_{\CO_{\SX}} \CP_{\CG}, \CI_{\CH})
                          \tag{Proposition \ref{adjtenhom}}\\
    & \cong \Hom_{\D}(\CF \otimes_{\CO_{\SX}} \CP_{\CG}, \CH)
    \\
    & \cong \Hom_{\D}(\CF \otimes^{\LL}_{\CO_{\SX}} \CG, \CH)
\end{align*}
The composed isomorphism is the one we were looking for.
\end{proof}

\begin{cor}
Let $\SX$ be an Adams geometric stack and $\CF, \CG, \CH \in \D(\A_\qc(\SX))$, then the \emph{internal} adjunction holds, \ie{} there is a natural isomorphism in $\D(\A_\qc(\SX))$
\[
 \dhom^\bullet_{\SX}(\CF, \dhom^\bullet_{\SX}(\CG, \CH)) \iso
 \dhom^\bullet_{\SX}(\CF \otimes^{\LL}_{\CO_{\SX}} \CG, \CH).
\]
\end{cor}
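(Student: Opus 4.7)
My plan is to obtain the internal adjunction from the external one (Proposition \ref{adjexttor}) via a Yoneda argument. The key observation is that both sides of the claimed isomorphism live in $\D(\A_\qc(\SX))$, so to identify them it suffices to show that the functors they corepresent, namely $\Hom_{\D(\A_\qc(\SX))}(\CK,-)$ applied to each side, agree naturally in $\CK \in \D(\A_\qc(\SX))$.

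Concretely, for an arbitrary $\CK \in \D(\A_\qc(\SX))$ I would build the following chain of natural isomorphisms, writing $\D := \D(\A_\qc(\SX))$ for brevity:
\begin{align*}
\Hom_{\D}\bigl(\CK,\,\dhom^\bullet_{\SX}(\CF,\dhom^\bullet_{\SX}(\CG,\CH))\bigr)
 &\cong \Hom_{\D}\bigl(\CK\otimes^{\LL}_{\CO_{\SX}}\CF,\,\dhom^\bullet_{\SX}(\CG,\CH)\bigr) \\
 &\cong \Hom_{\D}\bigl((\CK\otimes^{\LL}_{\CO_{\SX}}\CF)\otimes^{\LL}_{\CO_{\SX}}\CG,\,\CH\bigr) \\
 &\cong \Hom_{\D}\bigl(\CK\otimes^{\LL}_{\CO_{\SX}}(\CF\otimes^{\LL}_{\CO_{\SX}}\CG),\,\CH\bigr) \\
 &\cong \Hom_{\D}\bigl(\CK,\,\dhom^\bullet_{\SX}(\CF\otimes^{\LL}_{\CO_{\SX}}\CG,\,\CH)\bigr).
\end{align*}
The first, second and fourth isomorphisms are instances of Proposition \ref{adjexttor}; the third is associativity of $\otimes^{\LL}_{\CO_{\SX}}$, which is available because $(\D,\otimes^{\LL}_{\CO_{\SX}},\CO_{\SX})$ has already been stated to be a monoidal category.

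By Yoneda's lemma (applied in $\D$, whose Hom-sets are honest sets), the composed natural isomorphism of corepresentable functors is induced by a unique isomorphism between the representing objects, which yields the desired isomorphism in $\D$. Naturality in $\CF$, $\CG$ and $\CH$ follows formally from the naturality of each step in the chain.

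The only genuinely delicate point is to ensure that each step is natural in \emph{all} three variables, not just in the variable being adjoined; in particular one must verify that the external adjunction from Proposition \ref{adjexttor} is natural in its first argument $\CG$ (it is, because the construction of $\dhom^\bullet_{\SX}$ via $q$-flat and $q$-injective resolutions is functorial up to homotopy, and Lemmas \ref{lema4} and \ref{lema6} ensure these resolutions compute the derived bifunctor). Everything else is formal and parallels the analogous argument for schemes in \cite[(3.5.3)(e)]{yellow}.
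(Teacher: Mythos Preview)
Your argument is correct and is precisely the formal Yoneda-type deduction that the paper has in mind: the paper's proof is a one-line reference to \cite[Exercise (3.5.3)(e)]{yellow}, asserting that the internal adjunction follows from the axioms of a symmetric monoidal closed category, which unpacks exactly into the chain of isomorphisms you wrote using Proposition~\ref{adjexttor} and associativity of $\otimes^{\LL}_{\CO_{\SX}}$.
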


\begin{proof}
It is a consequence of the axioms of a symmetric monoidal closed category, see \cite[Exercise (3.5.3) (e)]{yellow}.
\end{proof}

\begin{thm}\label{asht2}
 Let $\SX$ be an Adams geometric stack. The category $\D(\A_\qc(\SX))$ has a natural structure of symmetric monoidal closed category, in other words, axiom (\ref{scl}) of \cite[1.1]{hps} holds.
\end{thm}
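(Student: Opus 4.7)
The plan is to assemble the pieces already established in the previous propositions and lemmas. All the hard work has really been done; what remains is to verify that the structural isomorphisms (associativity, unit, symmetry) of the monoidal structure on $\A_\qc(\SX)$ descend to the derived category, and then to invoke the adjunction of Proposition \ref{adjexttor} to get the closed part.

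First, I would fix, once and for all, a functorial assignment $\CF \mapsto \CP_\CF$ of $q$-flat resolutions, whose existence is Proposition \ref{qflatres} (one may for instance use functorial resolutions built from a generating family of finitely generated locally free sheaves, as in the proof of that proposition, or the homotopy-limit construction of \cite[Proposition 4.3]{AJS}). With this in hand, $\CF \otimes^\LL_{\CO_\SX} \CG := \CP_\CF \otimes_{\CO_\SX} \CG$ is a genuine bifunctor, and the balancing remark after the corollary shows it is symmetric up to canonical quasi-isomorphism. The associativity, unit (with unit $\CO_\SX$, which is already $q$-flat) and symmetry constraints in $\A_\qc(\SX)$ -- guaranteed by the symmetric monoidal structure on $\CO_\SX\md$ in $\SX_\ff$ -- lift verbatim to chain-level constraints on $q$-flat complexes, and one checks the coherence pentagon, triangle and hexagon by observing that both sides are computed by tensoring three $q$-flat complexes in the usual order; the diagrams commute strictly in $\K(\A_\qc(\SX))$ and hence in $\D(\A_\qc(\SX))$. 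This equips $(\D(\A_\qc(\SX)), \otimes^\LL_{\CO_\SX}, \CO_\SX)$ with a symmetric monoidal structure.

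For the closed part, I take $\dhom^\bullet_\SX$ as the internal hom. Proposition \ref{adjexttor} yields the external adjunction
\[
\Hom_{\D(\A_\qc(\SX))}(\CF \otimes^\LL_{\CO_\SX} \CG, \CH)\; \cong\; \Hom_{\D(\A_\qc(\SX))}(\CF, \dhom^\bullet_\SX(\CG, \CH)),
\]
naturally in the three variables, and the subsequent corollary upgrades this to the internal adjunction
\[
\dhom^\bullet_\SX(\CF \otimes^\LL_{\CO_\SX} \CG, \CH)\; \cong\; \dhom^\bullet_\SX(\CF, \dhom^\bullet_\SX(\CG, \CH)).
\]
Together with the monoidal structure above, this is precisely the data of a symmetric monoidal closed category in the sense of \cite[(3.5)]{yellow} (equivalently, axiom (\ref{scl}) of \cite[1.1]{hps}).

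The only genuinely delicate point is checking that the coherence constraints transferred via $q$-flat resolutions are well defined on $\D(\A_\qc(\SX))$; this is where the main obstacle lies, and it is handled in the standard fashion by observing that any two $q$-flat resolutions are canonically quasi-isomorphic and that tensor product preserves quasi-isomorphisms between $q$-flat complexes, so the associator and symmetry in $\K(\A_\qc(\SX))$ depend on the choice of resolution only up to a canonical isomorphism, which passes to $\D(\A_\qc(\SX))$. Everything else is formal.
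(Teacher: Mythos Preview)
Your approach is essentially the same as the paper's: assemble the monoidal structure from the existence of $q$-flat resolutions and then invoke Proposition~\ref{adjexttor} for the closed part. The paper is in fact terser about the coherence axioms---it simply refers back to the remark that $(\D(\A_\qc(\SX)),\otimes^{\LL}_{\CO_{\SX}},\CO_{\SX})$ is monoidal---so your explicit discussion of lifting associators and symmetries via $q$-flat resolutions, and of their independence from the chosen resolution, is more detailed than what the paper actually writes.

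There is, however, one ingredient you omit that the paper does include. Axiom~(\ref{scl}) in \cite[1.1]{hps} means ``symmetric monoidal closed'' in the triangulated sense of \cite[Definition~A.2.1]{hps}: one must check that $\otimes^{\LL}$ and $\dhom^\bullet_{\SX}$ are $\Delta$-functors in each variable, and that the symmetry is compatible with suspension via the sign rule. The paper records the first point explicitly (``both bifunctors are $\Delta$-functors in either variable'') and then checks the commutativity of the square
\[
\begin{tikzpicture}
\matrix(m)[matrix of math nodes, row sep=2.4em, column sep=2.6em,
text height=1.5ex, text depth=0.25ex]{
\CO_{\SX}[r]\otimes^{\LL}_{\CO_{\SX}}\CO_{\SX}[s] & \CO_{\SX}[r+s] \\
\CO_{\SX}[s]\otimes^{\LL}_{\CO_{\SX}}\CO_{\SX}[r] & \CO_{\SX}[r+s] \\};
\path[->,font=\scriptsize,>=angle 90]
(m-1-1) edge node[auto] {$\sim$} (m-1-2)
        edge node[left] {$\theta$} (m-2-1)
(m-1-2) edge node[auto] {$(-1)^{rs}$} (m-2-2)
(m-2-1) edge node[auto] {$\sim$} (m-2-2);
\end{tikzpicture}
\]
identifying $\theta$ (defined as in \cite[(1.5.4.1)]{yellow}) with the twist $T$ of \cite[Definition~A.2.1(4)]{hps}. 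Your write-up verifies the underlying categorical coherence but says nothing about this triangulated compatibility; strictly speaking, without it you have not yet established axiom~(\ref{scl}) in the sense \cite{hps} intends. The fix is routine---$\CO_{\SX}$ is $q$-flat in degree~$0$, so the diagram reduces to the standard sign convention for the symmetry of the tensor product of complexes---but it should be stated.
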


\begin{proof}
As we remarked $(\D(\A_\qc(\SX)), \otimes^{\LL}_{\CO_{\SX}}, \CO_{\SX})$ constitutes a monoidal category. 

The internal hom is well behaved, namely, the adjunction 
\[-\otimes^{\LL}_{\CO_{\SX}}\CG \dashv \dhom^\bullet_{\SX}(\CG, -)\]
 for every $\CG \in \D(\A_\qc(\SX))$, holds by Proposition \ref{adjexttor}. It is clear that both bifunctors are $\Delta$-functors in either variable. Finally, the diagram
\begin{center}
\begin{tikzpicture}
\matrix(m)[matrix of math nodes, row sep=2.6em, column sep=2.8em,
text height=1.5ex, text depth=0.25ex]{
\CO_{\SX}[r] \otimes^{\LL}_{\CO_{\SX}} \CO_{\SX}[s] & \CO_{\SX}[r + s] \\
\CO_{\SX}[s] \otimes^{\LL}_{\CO_{\SX}} \CO_{\SX}[r] & \CO_{\SX}[r + s] \\
 };
\path[->,font=\scriptsize,>=angle 90]
(m-1-1) edge node[auto] {$\sim$} (m-1-2)
        edge node[left] {$\theta$} (m-2-1)
(m-1-2) edge node[auto] {$(-1)^{rs}$} (m-2-2)
(m-2-1) edge node[auto] {$\sim$} (m-2-2);
\end{tikzpicture}
\end{center}
with $\theta$ defined as in \cite[(1.5.4.1)]{yellow} is commutative. Note that $\CO_{\SX}$ is $q$-flat considered as a complex concentrated in degree 0. The map $\theta$ corresponds to $T$ in \cite[Definition A.2.1(4)]{hps}.
\end{proof}

\section{Dualizable generators}

\begin{cosa}
Let $\SX$ be a geometric stack. A complex $\CF \in \D(\A_\qc(\SX))$ is called \emph{strongly dualizable} if the canonical map
\[
\dhom^\bullet_{\SX}(\CF, \CO_{\SX}) \otimes^{\LL}_{\CO_{\SX}} \CG \lto
\dhom^\bullet_{\SX}(\CF, \CG)
\]
is an isomorphism for all $\CG \in \D(\A_\qc(\SX))$ \cite[Definition 1.1.2]{hps}.

A complex $\CE \in \K(\A_\qc(\SX))$ is called \emph{strictly perfect} if it is a bounded complex of locally free finitely generated modules. We say that $\CE$ is \emph{perfect} if it is locally (for the small flat topology) quasi-isomorphic to a strictly perfect complex. Observe that $\CE$ is perfect if, and only if, for any presentation $p \colon U \to \SX$, the complex $p^*\CE$ is quasi-isomorphic to a strictly perfect complex.

Notice that on an Adams geometric stack every perfect complex is isomorphic to a strictly perfect complex in $\D(\A_\qc(\SX))$. The interested reader may adapt the proof in \cite[Proposition 2.2.9 (b)]{erg} or \cite[Proposition 2.3.1 (d)]{tt}.
\end{cosa}

\begin{prop}
In this setting, if $\CE$ is strictly perfect, then $\CE$ is $q$-flat.
\end{prop}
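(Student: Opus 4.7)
The plan is to combine Lemma \ref{locisflat}, which yields flatness of each component of $\CE$, with the boundedness of $\CE$, to obtain $q$-flatness by a short induction on the number of nonzero terms of $\CE$.

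First I would dispatch the single-term case. If $\CE$ is concentrated in a single degree, say $\CE \cong \CE^k[-k]$ with $\CE^k$ locally free of finite type, then Lemma \ref{locisflat} says $\CE^k$ is flat, so for any acyclic complex $\CA \in \K(\A_\qc(\SX))$ the complex
\[
\CE \otimes_{\CO_{\SX}} \CA \;\cong\; (\CE^k \otimes_{\CO_{\SX}} \CA)[-k]
\]
is acyclic; a degreewise shift does not affect acyclicity.

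For a general strictly perfect $\CE$, let $m$ be the number of nonzero components of $\CE$ and induct on $m$. Pick an integer $n$ such that both stupid truncations $\sigma^{\geq n}\CE$ and $\sigma^{<n}\CE$ are strictly perfect with strictly fewer than $m$ nonzero terms. The associated short exact sequence of complexes
\[
0 \lto \sigma^{\geq n}\CE \lto \CE \lto \sigma^{<n}\CE \lto 0
\]
is termwise split, since in each degree it is either the sequence $0 \to 0 \to \CE^i \to \CE^i \to 0$ or $0 \to \CE^i \to \CE^i \to 0 \to 0$. Tensoring termwise with any $\CA \in \K(\A_\qc(\SX))$ preserves this split-exactness, so we obtain a short exact sequence of complexes whose two outer terms are acyclic by the induction hypothesis whenever $\CA$ is acyclic; the long exact cohomology sequence then forces $\CE \otimes_{\CO_{\SX}} \CA$ to be acyclic as well.

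I do not expect any genuine obstacle here: Lemma \ref{locisflat} supplies flatness of each component and the induction terminates because $\CE$ is bounded. The one point worth emphasizing is the use of the \emph{brutal} (stupid) truncation, so that the sequence used in the induction is termwise split and therefore remains short exact under $-\otimes_{\CO_{\SX}}\CA$ without imposing any further flatness condition on the truncations themselves.
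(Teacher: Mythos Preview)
Your proof is correct and follows the same approach as the paper: use Lemma~\ref{locisflat} to see that each component of $\CE$ is flat, and then conclude that a bounded complex of flat objects is $q$-flat. The paper's proof is a single sentence that simply asserts the last implication, whereas you unpack it via the induction on brutal truncations; your extra detail is sound but not strictly necessary.
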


\begin{proof}
By Lemma \ref{locisflat}, $\CE$ is a bounded complex of flat sheaves, therefore it is $q$-flat.
\end{proof}

We have the following analogue of \cite[Theorem 2.4.1 (c)]{tt}.

\begin{lem}\label{perfqchom}
If $\CE$ is a strictly perfect complex and $\CH$ quasi-coherent, it follows that $\shom^\bullet_{\SX}(\CE, \CH) \in \K(\A_\qc(\SX))$.
\end{lem}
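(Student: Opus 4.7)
My plan is to verify the Cartesian property of each component of $\shom^\bullet_{\SX}(\CE, \CH)$ directly, invoking the characterization of quasi-coherent sheaves as Cartesian presheaves from \cite[Theorem 3.12]{gstck}. Since $\CE$ is bounded and each $\CE^i$ is locally free of finite rank, each degree of $\shom^\bullet_{\SX}(\CE, \CH)$ is a finite direct sum of sheaves of the form $\shom_{\SX}(\CE^i, \CH)$, and the Cartesian condition is stable under finite direct sums and shifts. So I am reduced to showing that $\shom_{\SX}(\CE, \CH)$ is Cartesian whenever $\CE$ is a single locally free finite-rank $\CO_{\SX}$-Module and $\CH \in \A_\qc(\SX)$.

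Given a morphism $(f, \alpha) \colon (W, w) \to (V, v)$ in $\aff_{\ff}/\SX$ with $V = \spec(C)$ and $W = \spec(B)$, the Cartesian condition is the bijectivity of the canonical map
\[
B \otimes_C \shom_{\SX}(\CE, \CH)(V, v) \lto \shom_{\SX}(\CE, \CH)(W, w).
\]
Setting $M := \CE(V, v)$ and $N := \CH(V, v)$, the local triviality of $\CE$ combined with faithfully flat descent gives that $M$ is a finitely generated projective $C$-module, while the Cartesian property of $\CE$ and $\CH$ (together with the equivalence between quasi-coherent sheaves on an affine scheme and modules over its coordinate ring) yields the identifications
\[
\shom_{\SX}(\CE, \CH)(V, v) \cong \Hom_C(M, N), \qquad
\shom_{\SX}(\CE, \CH)(W, w) \cong \Hom_B(B \otimes_C M,\, B \otimes_C N).
\]
Under these identifications, the displayed arrow becomes the classical base-change morphism
\[
B \otimes_C \Hom_C(M, N) \lto \Hom_B(B \otimes_C M,\, B \otimes_C N),
\]
which is an isomorphism because $M$ is finitely generated projective over $C$.

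The main obstacle lies in the identification $\shom_{\SX}(\CE, \CH)(V, v) \cong \Hom_C(M, N)$, which requires comparing the internal Hom on the small flat site of $\SX$ with module homomorphisms of the sections over an affine. This uses the finite generation of $M$ and the standard comparison between quasi-coherent sheaves on an affine scheme and modules over its coordinate ring. Note that the Adams hypothesis plays no role here: only the local trivialization of $\CE$ and the Cartesian character of $\CE$ and $\CH$ are needed.
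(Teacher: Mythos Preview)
Your argument is correct and rests on the same core fact as the paper's: for a finitely generated projective module $M$, the base-change map $B \otimes_C \Hom_C(M,N) \to \Hom_B(B\otimes_C M,\, B\otimes_C N)$ is an isomorphism. The difference is organizational. The paper pulls back along a single presentation $p \colon U \to \SX$, uses that $p^*\!\shom^\bullet_{\SX}(\CE,\CH) \cong \shom^\bullet_U(p^*\CE, p^*\CH)$, invokes the classical affine-scheme fact that $\shom$ from a locally free finite-type sheaf into a quasi-coherent sheaf is quasi-coherent, and then descends quasi-coherence via \cite[Lemma 3.11]{gstck}. You instead verify the Cartesian condition directly at every morphism of the site. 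Your approach is more self-contained (no appeal to \cite{tt} or to descent of quasi-coherence along $p$), while the paper's is more economical.

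One refinement worth making explicit: the identification $\shom_{\SX}(\CE,\CH)(V,v) \cong \Hom_C(M,N)$ does not really need a comparison with the small flat site of $V$. The slice of $\aff_{\ff}/\SX$ over $(V,v)$ is generally larger than $\aff_{\ff}/V$ (a map $(W,w)\to(V,v)$ need not have $W\to V$ flat), but this does not matter: since $\CE$ and $\CH$ are Cartesian on $\aff_{\ff}/\SX$, every such map gives $\CE(W,w)\cong B\otimes_C M$ and $\CH(W,w)\cong B\otimes_C N$, so a morphism $\CE|_V \to \CH|_V$ is determined by, and determines, its component at $(V,v)$. That is the real content of the step you flagged. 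Also note that if one reads the statement with $\CH$ a complex of quasi-coherent sheaves (as it is applied in Proposition~\ref{perfsd}), your reduction still works: each degree of $\shom^\bullet_{\SX}(\CE,\CH)$ is a \emph{finite} product $\prod_p \shom_{\SX}(\CE^p,\CH^{p+n})$ because $\CE$ is bounded.
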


\begin{proof}
It is a ($\ff$-)local question, so the proof follows similar lines that the aforementioned result. Let $p \colon U \to \SX$ be a presentation, it is enough to check that $p^*\!\shom^\bullet_X(\CE, \CH) \in \K(\A_\qc(U))$ using \cite[Lemma 3.11]{gstck}, in view of the agreement between quasi-coherent sheaves and Cartesian presheaves. But $p^*\!\shom^\bullet_{\SX}(\CE, \CH) = \shom^\bullet_U(p^*\CE, p^*\!\CH)$, and this last complex is made of quasi-coherent sheaves because the components $p^*\CE$ are finitely generated locally free modules and those of $p^*\!\CH$ are quasi-coherent.
\end{proof}

\begin{prop} \label{perfsd}
Let $\SX$ be an Adams geometric stack. A perfect complex in $\D(\A_\qc(\SX))$ is strongly dualizable.
\end{prop}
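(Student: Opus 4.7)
My plan is to reduce to the strictly perfect case and then identify both sides of the canonical map explicitly. Since $\SX$ is Adams, every perfect complex is isomorphic in $\D(\A_\qc(\SX))$ to a strictly perfect one, so I may assume $\CF = \CE$ is a bounded complex of finitely generated locally free $\CO_{\SX}$-Modules. Set $\CE^\vee := \shom^\bullet_{\SX}(\CE, \CO_{\SX})$; its terms are termwise duals of finitely generated locally free sheaves, so $\CE^\vee$ is again strictly perfect and in particular $q$-flat.

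The crux is the claim that for every $\CH \in \K(\A_\qc(\SX))$ the natural evaluation map
\[
\mathrm{ev}_\CH \colon \CE^\vee \otimes_{\CO_{\SX}} \CH \lto \shom^\bullet_{\SX}(\CE, \CH)
\]
is an isomorphism in $\K(\A_\qc(\SX))$. Both sides are quasi-coherent (the left by Proposition \ref{qcten}, the right by Lemma \ref{perfqchom}), and the Cartesian characterization of quasi-coherence reduces the check to pulling back along a presentation $p \colon U \to \SX$, where it is the classical statement that $\Hom_{A_0}(P, A_0) \otimes_{A_0} M \cong \Hom_{A_0}(P, M)$ for bounded complexes $P$ of finitely generated projective $A_0$-modules, with $A_0 = \Gamma(U, \CO_U)$.

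Once the evaluation isomorphism is in hand the rest is formal. Choose $q$-injective resolutions $\CO_{\SX} \to \CI_0$ and $\CG \to \CI_\CG$ in $\A_\qc(\SX)$. By Lemma \ref{perfqchom}, $\shom^\bullet_{\SX}(\CE, \CI_0)$ and $\shom^\bullet_{\SX}(\CE, \CI_\CG)$ already lie in $\K(\A_\qc(\SX))$, so the coherator acts trivially. Combined with $\mathrm{ev}$ this identifies
\[
\dhom^\bullet_{\SX}(\CE, \CO_{\SX}) \cong \CE^\vee \otimes_{\CO_{\SX}} \CI_0 \cong \CE^\vee
\]
and
\[
\dhom^\bullet_{\SX}(\CE, \CG) \cong \CE^\vee \otimes_{\CO_{\SX}} \CI_\CG \cong \CE^\vee \otimes^{\LL}_{\CO_{\SX}} \CG
\]
in $\D(\A_\qc(\SX))$, where the second isomorphism in each line uses that $\CE^\vee$ is $q$-flat. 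Under these identifications the canonical strong duality map becomes the identity on $\CE^\vee \otimes^{\LL}_{\CO_{\SX}} \CG$, hence is an isomorphism.

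The main obstacle I anticipate is the evaluation-isomorphism claim: one must check that $\mathrm{ev}$ is natural in $\CH$, compatible with pullback along $p$, and then descend the resulting local isomorphism using the Cartesian description of $\A_\qc(\SX)$. Once that is done, the coherator itself causes no difficulty because Lemma \ref{perfqchom} keeps $\shom^\bullet_{\SX}(\CE, -)$ inside $\A_\qc(\SX)$, and all the manipulations above are bookkeeping with $q$-flat and $q$-injective resolutions.
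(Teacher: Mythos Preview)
Your proposal is correct and follows essentially the same route as the paper: both reduce to a strictly perfect $\CE$, use Lemma~\ref{perfqchom} so that the coherator is harmless, and verify the key isomorphism by pulling back along a presentation $p \colon U \to \SX$ to the affine case. The only cosmetic difference is that you establish the evaluation isomorphism $\CE^\vee \otimes_{\CO_{\SX}} \CH \cong \shom^\bullet_{\SX}(\CE,\CH)$ at the complex level first and then derive, while the paper writes down the derived map and checks it becomes an isomorphism after applying $p^*$.
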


\begin{proof}
Let $\CE$ be a perfect complex that we may suppose strictly perfect and $\CG \in \D(\A_\qc(\SX))$. Choose a $q$-injective resolution $\CG \to \CI_\CG$ in such a way that 
\[\rshom^\bullet_{\SX}(\CE, \CG) = \shom^\bullet_{\SX}(\CE, \CI_\CG).\] 
Being $\CE$ strictly perfect and $\CI_\CG$ quasi-coherent, it follows from Lemma \ref{perfqchom} that $\shom^\bullet_X(\CE, \CI_\CG)$ is quasi-coherent too. To show that 
\[
\dhom^\bullet_{\SX}(\CE, \CO_{\SX}) \otimes^{\LL}_{\CO_{\SX}} \CG \lto
\dhom^\bullet_{\SX}(\CE, \CG)
\]
is an isomorphism, we just have to prove that
\[
\alpha \colon
\rshom^\bullet_{\SX}(\CE, \CO_{\SX}) \otimes^{\LL}_{\CO_{\SX}} \CG \lto
\rshom^\bullet_{\SX}(\CE, \CG)
\]
is an isomorphism in $\D(\A_\qc(\SX))$. Take a presentation $p \colon U \to \SX$ and a $q$-injective resolution $\CO_{\SX} \to \CI_{\CO_{\SX}}$, we have

\begin{align*}
p^*(\rshom^\bullet_{\SX}(\CE, \CO_{\SX}) \otimes^{\LL}_{\CO_{\SX}} \CG) 
    & \cong p^*(\shom^\bullet_{\SX}(\CE, \CI_{\CO_{\SX}}) 
    \otimes^{\LL}_{\CO_{\SX}} \CG)\\   
    & \cong p^*\!\shom^\bullet_{\SX}(\CE, \CI_{\CO_{\SX}}) \otimes^{\LL}_{\CO_U} p^*\CG \\ 
    & \cong \shom^\bullet_U(p^*\CE, p^*\CI_{\CO_X}) \otimes^{\LL}_{\CO_U} p^*\CG  \\
    & \cong \shom^\bullet_U(p^*\CE, \CO_U) \otimes^{\LL}_{\CO_U} p^*\CG \tag{$\lozenge$}\\   
    & \cong \shom^\bullet_U(p^*\CE, p^*\CG)  \\   
    & \cong \shom^\bullet_U(p^*\CE, p^*\CI_{\CG})  \\
    & \cong p^*\!\shom^\bullet_{\SX}(\CE, \CI_{\CG})  \\   
    & \cong p^*\rshom^\bullet_{\SX}(\CE, \CG) 
\end{align*} 
The isomorphism ($\lozenge$) is a consequence that perfect complexes like $p^*\CE$ are strongly dualizable on an (affine) scheme. We conclude that $p^*\alpha$ is an isomorphism. Then, by faithful flatness of $p$, $\alpha$ is also an isomorphism.
\end{proof}

The following lemma is well-known, but we recall it here for the convenience of the readers. It expresses the agreement of two notions of generation of cocomplete triangulated categories, one of them used in \cite{hps}. Let $\A$ be a Grothendieck category. We say that $\D(\A)$ is generated by a subset of objects $\SS$ if $\D(\A)$ is its smallest triangulated subcategory stable for coproducts containing $\SS$.

\begin{lem}
$\D(\A)$ is generated by $\SS$ if and only if $\SS^\perp = 0$. 
\end{lem}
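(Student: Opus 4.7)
The plan is to verify the two implications using Brown representability, which is available by Theorem \ref{asht145}, together with the standard Bousfield localization argument.

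For the forward direction, suppose $\D(\A)$ is generated by $\SS$ in the sense stated, and let $\CT$ denote the smallest triangulated subcategory of $\D(\A)$ stable under coproducts containing $\SS$, so $\CT = \D(\A)$ by hypothesis. If $Y \in \SS^\perp$, then the class of objects $X$ with $\Hom_{\D(\A)}(X, Y) = 0$ is a triangulated subcategory closed under coproducts and shifts, containing $\SS$ by assumption; hence it equals $\D(\A)$. Taking $X = Y$ yields $\iid_Y = 0$, so $Y = 0$.

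For the reverse direction, the strategy is to build, for each $Y \in \D(\A)$, a distinguished triangle
\[
X \lto Y \lto C \lto X[1]
\]
with $X \in \CT$ and $C \in \SS^\perp$. Once this is achieved, the hypothesis $\SS^\perp = 0$ forces $C = 0$ and therefore $Y \cong X \in \CT$, giving $\CT = \D(\A)$. First I would consider the contravariant functor
\[
F \colon \CT^{\op} \lto \ab, \qquad F(X') := \Hom_{\D(\A)}(X', Y).
\]
This is cohomological and sends coproducts in $\CT$ to products, since coproducts in $\CT$ coincide with coproducts in $\D(\A)$. Next I would invoke Brown representability for $\CT$: as $\CT$ is a triangulated category with coproducts that is generated by the set $\SS$, any such $F$ is representable by an object $X \in \CT$, i.e.\ $F(-) \cong \Hom_{\CT}(-, X)$ naturally. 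The identity of $X$ corresponds to a morphism $\eta \colon X \to Y$, and I complete it to a distinguished triangle with cone $C$.

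Finally, applying $\Hom_{\D(\A)}(S[n], -)$ for any $S \in \SS$ and $n \in \ZZ$ to this triangle, the map $\Hom_{\D(\A)}(S[n], X) \to \Hom_{\D(\A)}(S[n], Y)$ is exactly the isomorphism provided by the representability of $F$ (since $S[n] \in \CT$), and the long exact sequence then forces $\Hom_{\D(\A)}(S[n], C) = 0$, i.e.\ $C \in \SS^\perp$. The main obstacle is the Brown representability step for the subcategory $\CT$; one may either cite a Brown representability theorem for well-generated (in particular, compactly or perfectly generated) triangulated categories, or deduce it directly from \cite[Theorem 5.8]{AJS} after observing that $\CT$ inherits the relevant structure from $\D(\A)$ via its set of generators $\SS$.
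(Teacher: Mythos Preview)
Your argument is correct and follows the same route as the paper, which simply cites \cite[Theorem 5.7]{AJS} and \cite[Proposition 1.6]{AJS}: both reduce the reverse implication to the existence of a Bousfield localization for the localizing subcategory $\CT$ generated by $\SS$, and then read off $C \in \SS^\perp$ from the localization triangle.

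The only soft spot is your justification of Brown representability for $\CT$. Your second option, invoking \cite[Theorem 5.8]{AJS} ``after observing that $\CT$ inherits the relevant structure,'' does not go through as stated: that theorem is formulated for $\D(\A)$ with $\A$ Grothendieck, and $\CT$ is not a priori of this form. What actually does the work is \cite[Theorem 5.7]{AJS}, which directly constructs the right adjoint to the inclusion $\CT \hookrightarrow \D(\A)$ for any set $\SS$ (via an explicit argument in the homotopy category, not via Brown representability for $\CT$). Once you cite that result at the representability step, your proof becomes exactly the paper's. Your first option, appealing to Neeman's Brown representability for well-generated triangulated categories, is also valid---$\CT$ is well generated as a localizing subcategory of a well-generated category generated by a set---but this is heavier machinery than the paper needs, since \cite{AJS} already handles the Grothendieck-category case directly.
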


\begin{proof}
It follows, for instance,  from \cite[Theorem 5.7]{AJS} in view of \cite[Proposition 1.6]{AJS}.
\end{proof}

The next theorem gives a particular set of generators of $\D(\A_\qc(\SX))$. 

\begin{thm}\label{asht3} 
Let $\SX$ be an Adams geometric stack. Axiom (\ref{sdg}) of \cite[1.1]{hps} holds in the category $\D(\A_\qc(\SX))$. 
\end{thm}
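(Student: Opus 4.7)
My plan is to exhibit an explicit set $\SS$ of strongly dualizable objects that generates $\D(\A_\qc(\SX))$. The natural candidate: take $\SS$ to be a set of representatives of isomorphism classes of finitely generated locally free $\CO_\SX$-Modules, regarded as complexes concentrated in degree zero. Each element of $\SS$ is then a strictly perfect complex, hence a perfect complex, and is therefore strongly dualizable by Proposition \ref{perfsd}.

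By the preceding lemma, verifying that $\SS$ generates $\D(\A_\qc(\SX))$ reduces to checking that $\SS^\perp = 0$. Here the Adams hypothesis enters through Theorem \ref{aisrp}: it delivers the resolution property, so $\SS$ is a set of generators of $\A_\qc(\SX)$ as a Grothendieck category, \ie{} every nonzero quasi-coherent sheaf admits a nonzero morphism from some member of $\SS$.

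To pass from generation in $\A_\qc(\SX)$ to the vanishing of $\SS^\perp$ in $\D(\A_\qc(\SX))$, I would argue as follows. Given $\CF \in \SS^\perp$, choose a $q$-injective resolution $\CF \to \CI$ in $\A_\qc(\SX)$ with each term injective. The hypothesis translates into $\Hom_\K(\CE[n], \CI) = 0$ for every $\CE \in \SS$ and $n \in \ZZ$. Appealing to the standard principle (\cfr{} \cite[Theorem 5.7]{AJS}) that a generating family of a Grothendieck category, placed in degree zero, yields a generating family of the associated unbounded derived category, one concludes that $\CI$ is acyclic, so $\CF \cong 0$ in $\D(\A_\qc(\SX))$.

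The hard part is this final step: bridging generation in the abelian category $\A_\qc(\SX)$ to the vanishing of $\SS^\perp$ in the unbounded derived category. For bounded-below complexes a short truncation argument suffices, but the unbounded case is delicate and requires the homotopy limits of truncations developed in \cite{AJS}, which we invoke as a black box.
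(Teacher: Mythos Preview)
Your proposal is correct and follows essentially the same route as the paper: take the finitely generated locally free sheaves (the paper phrases it as ``all of their suspensions''), observe via Proposition~\ref{perfsd} that these are strongly dualizable, and use the Adams condition through Theorem~\ref{aisrp} to see that they generate $\A_\qc(\SX)$ as a Grothendieck category, whence they generate $\D(\A_\qc(\SX))$ by the preceding lemma and \cite[Theorem 5.7]{AJS}. The only difference is that you unpack the last step a bit more explicitly (via a $q$-injective resolution and translation to $\K$), whereas the paper simply invokes the lemma; the underlying content is the same.
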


\begin{proof}
In view of Proposition \ref{perfsd}, to prove that strongly dualizable objects generate $\D(\A_\qc(\SX))$, it is enough to prove that perfect complexes do. And this follows from the Adams condition. Indeed, by Theorem \ref{aisrp}, the category $\A_\qc(\SX)$ is generated in the sense of Abelian categories with coproducts by the set (of isomorphism classes) of locally free sheaves. Therefore the set of all of its suspensions, which are indeed perfect complexes themselves, generate $\D(\A_\qc(\SX))$ by the previous lemma.
\end{proof}

\begin{rem}
The previous result does not imply that $\D(\A_\qc(\SX))$ is compactly generated. By \cite[Theorem 1.3]{HNR} there are examples of geometric stacks $\SX$ in which $\D_\qc(\SX)$ and therefore $\D(\A_\qc(\SX))$ are not compactly generated. Notice that $\SB\mathbb{G}_a$, the classifying stack of the additive group $\mathbb{G}_a$, is an Adams geometric stack that satisfies the hypothesis in \lc
\end{rem}

\begin{cor}\label{st=sht}
 Let $\SX$ be an Adams geometric stack, the category $\D(\A_\qc(\SX))$ is a stable homotopy category in the sense of \cite{hps}.
\end{cor}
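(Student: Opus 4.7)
The plan is simply to collect the previously established results. The five axioms of a stable homotopy category from \cite{hps}, as listed in the Introduction, are axioms (\ref{tri}), (\ref{scl}), (\ref{sdg}), (\ref{cop}) and (\ref{brw}), so I would verify each one by pointing to the appropriate place in the paper.

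First, I would invoke Theorem \ref{asht145}, which already handles three of the five axioms at once: it gives (\ref{tri}) that $\D(\A_\qc(\SX))$ is triangulated, (\ref{cop}) that it admits arbitrary coproducts, and (\ref{brw}) Brown representability for cohomological functors $\ST^{\op} \to \ab$. All three follow from the fact that $\A_\qc(\SX)$ is a Grothendieck category, which is why no additional hypothesis beyond geometricity is needed at that step.

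Next, the symmetric monoidal closed structure (\ref{scl}) is Theorem \ref{asht2}: here the Adams condition enters, since one needs $q$-flat resolutions (Proposition \ref{qflatres}) to construct $\otimes^{\LL}_{\CO_{\SX}}$, and the coherator applied to the sheaf $\shom_{\SX}$ to define the internal hom $\dhom^\bullet_{\SX}$, with the $\otimes$-Hom adjunction given by Proposition \ref{adjexttor}.

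Finally, the existence of strongly dualizable generators (\ref{sdg}) is Theorem \ref{asht3}, which uses the resolution property (equivalent to the Adams condition by Theorem \ref{aisrp}) to produce a generating set of finitely generated locally free sheaves, and Proposition \ref{perfsd} to conclude that these (viewed as perfect complexes) are strongly dualizable in $\D(\A_\qc(\SX))$. Thus the corollary follows by citation and there is no real obstacle; the only subtlety worth flagging in the write-up is the remark that, although all five axioms of \cite{hps} hold, this does not upgrade to compact generation, as noted in the Remark following Theorem \ref{asht3}.
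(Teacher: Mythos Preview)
Your proposal is correct and follows exactly the paper's own proof, which simply reads ``Combine Theorems \ref{asht145}, \ref{asht2}, \ref{asht3}.'' You have merely unpacked what each of these theorems contributes (and added the remark about compact generation), but the logical content is identical.
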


\begin{proof}
Combine Theorems \ref{asht145}, \ref{asht2}, \ref{asht3}.
\end{proof}

\begin{rem}
 In a recent preprint Hall and Rydh prove that for a geometric stack $\SX$ with \emph{quasi-finite} diagonal, the category $\D_\qc(\SX)$ is compactly generated by a single perfect complex \cite[Theorem A]{hr}. Thus, in this case, the category $\D(\A_\qc(\SX))$ is an \emph{algebraic} stable homotopy category, in the terminology of \cite[Definition 1.1.4]{hps}.
\end{rem}

\section{The closed structure and comodules}

\begin{cosa}
Let $A_\bullet$ be a Hopf algebroid and $\SX = \stack(A_\bullet)$ its associated geometric stack. Let us relate the comodule tensor product over $A_\bullet$ with the just described tensor product in $\A_\qc(\SX)$.

Let $(M, \psi_M)$ and $(N, \psi_N)$ be left $A_\bullet$-comodules. It is possible to define a structure of comodule on $M \otimes_{A_0} N$, that we will denote by $M \otimes_{A_\bullet}^\cc N$ and whose structure map is given by the composition
\[
M \otimes_{A_0} N 
\xto{\psi_M \otimes \psi_N} (A_1\: {_{\eta_R} \otimes_{A_0}}M) \otimes_{A_0} (A_1\: {_{\eta_R} \otimes_{A_0}} N) 
\overset{g}{\lto} A_1\: {_{\eta_R} \otimes_{A_0}}M \otimes_{A_0} N
\]
where $g(a \otimes m \otimes a' \otimes n) = aa' \otimes m \otimes n$. We follow the definition given by Hovey in \cite[Lemma 1.1.2]{hov} (where $\otimes^\cc$ is denoted by $\wedge$).
\end{cosa}

Denote by $A_{\bullet}\com$ the category of $A_\bullet$-comodules. Recall the equivalence
\[
\GGamma_p^{\SX} \colon \A_\qc(\SX) \liso A_{\bullet}\com
\]
from \cite[Corollary 5.9]{gstck}. Our next task will be to show that his equivalence respects the closed structure on the corresponding categories.

\begin{prop}\label{comptor}
We have that
\[
\GGamma_p^{\SX}(\CF \otimes_{\CO_{\SX}} \CG) \liso 
\GGamma_p^{\SX}(\CF) \otimes_{A_\bullet}^\cc \GGamma_p^{\SX}(\CG)
\]
\end{prop}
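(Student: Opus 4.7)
The plan is to unpack both sides using the explicit description of the equivalence $\GGamma_p^{\SX}$ recalled above. This functor sends a quasi-coherent sheaf $\CF$ to the pair $(\CF(U,p), \psi_{\CF(U,p)})$, where $\CF(U,p)$ is the underlying $A_0$-module and the structure map $\psi_{\CF(U,p)}$ is extracted from the Cartesian descent isomorphism associated with the pullback square $U \times_{\SX} U \rightrightarrows U \to \SX$, using that $U \times_{\SX} U = \spec(A_1)$ and that $p_1, p_2$ correspond under the algebra-geometry duality to $\eta_L, \eta_R \colon A_0 \to A_1$.

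\textbf{Step 1 (underlying modules).} By Proposition \ref{qcten}, $\CF \otimes_{\CO_{\SX}} \CG$ is quasi-coherent, hence Cartesian as a presheaf by \cite[Theorem 3.12]{gstck}. In particular no sheafification intervenes when evaluating at an affine object, and
\[
(\CF \otimes_{\CO_{\SX}} \CG)(U,p) \cong \CF(U,p) \otimes_{A_0} \CG(U,p),
\]
which is precisely the underlying $A_0$-module of $\GGamma_p^{\SX}(\CF) \otimes_{A_\bullet}^\cc \GGamma_p^{\SX}(\CG)$.

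\textbf{Step 2 (comodule structures).} Writing $M = \CF(U,p)$ and $N = \CG(U,p)$, the comodule map $\psi_M$ is obtained by composing the restriction $M \to \CF(U\times_{\SX} U, p p_2)$ along $p_2$ with the inverse of the Cartesian isomorphism $\wadj{(\CF(p_1,\phi))} \colon A_1 \otimes_{A_0} M \liso \CF(U\times_{\SX} U, p p_1)$ (and similarly for $\psi_N$). Applying this recipe to $\CF \otimes_{\CO_{\SX}} \CG$ and using the commutative diagram at the end of the proof of Proposition \ref{qcten}, the Cartesian datum of the tensor product factors as the tensor product of the individual Cartesian data followed by the canonical multiplication
\[
(A_1 \otimes_{A_0} M) \otimes_{A_0} (A_1 \otimes_{A_0} N) \lto A_1 \otimes_{A_0} (M \otimes_{A_0} N), \qquad a\otimes m \otimes a' \otimes n \mapsto aa' \otimes m \otimes n.
\]
This is exactly the map $g \circ (\psi_M \otimes \psi_N)$ in Hovey's definition of $\otimes_{A_\bullet}^\cc$. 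Naturality in $\CF$ and $\CG$ is then automatic from the functoriality of restriction and of $\GGamma_p^{\SX}$ itself.

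The principal obstacle, more technical than conceptual, lies in the bookkeeping of Step 2: one must verify carefully that the Cartesian isomorphism for $\CF \otimes_{\CO_{\SX}} \CG$ decomposes as claimed, keeping track of the 2-morphism $\phi$ and the asymmetric roles of $\eta_L$ and $\eta_R$. Once this matching is in place, the identification of the two comodule structures is immediate and the proposition follows.
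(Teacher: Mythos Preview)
Your proposal is correct and follows essentially the same route as the paper: identify the underlying $A_0$-modules via the presheaf description of the tensor product, then use the commutative triangle from Proposition~\ref{qcten} to factor the Cartesian datum of $\CF \otimes_{\CO_{\SX}} \CG$ and match the resulting structure map with $g \circ (\psi_M \otimes \psi_N)$. The paper carries out the Step~2 bookkeeping you flag as the main obstacle via an explicit chain of equalities for $\psi_P$, writing $\psi_M = (\wadj{\CF(p_2,\iid)})^{-1}\,\CF(p_1,\phi^{-1})$ (note the paper's convention has the roles of $p_1$ and $p_2$ swapped relative to your description), but the argument is the same.
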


\begin{proof}
Let us recall some basic notation. We have a Cartesian diagram
\begin{equation}\label{phi}
\begin{tikzpicture}[baseline=(current  bounding  box.center)]
\matrix(m)[matrix of math nodes, row sep=2.6em, column sep=2.8em,
text height=1.5ex, text depth=0.25ex]{
  U\times_{\SX}U & U \\
  U              & \SX \\};
\path[->,font=\scriptsize,>=angle 90] (m-1-1) edge
node[auto] {$p_2$}
(m-1-2) edge node[left] {$p_1$} (m-2-1)
(m-1-2) edge node[auto] {$p$} (m-2-2)
(m-2-1) edge node[auto] {$p$} (m-2-2);
\draw [shorten >=0.2cm,shorten <=0.2cm, ->, double] (m-2-1) -- (m-1-2) node[auto, midway,font=\scriptsize]{$\phi$};
\end{tikzpicture}
\end{equation}
Note that $U = \spec(A_0)$ and $U \times_{\SX} U = \spec(A_1)$. Let $M = \CF(U,p)$ and $N = \CG(U,p)$. Then $\GGamma_p^{\SX}(\CF) = (M, \psi_M)$ and $\GGamma_p^{\SX}(\CG) = (N, \psi_N)$, where
\[
\psi_M = (\wadj{\CF(p_2,\iid)})^{-1} \CF(p_1,\phi^{-1}) \quad 
\text{ and } 
\quad \psi_N = (\wadj{\CG(p_2,\iid)})^{-1} \CG(p_1,\phi^{-1})
\]
We have to compare $g \circ (\psi_M \otimes \psi_N)$ with the structure map of $P :=\GGamma_p^{\SX}(\CF \otimes_{\CO_{\SX}} \CG)$. As before, this map is
\[
\psi_{P} =  (\wadj{((\CF \otimes_{\CO_{\SX}} \CG)(p_2,\iid))})^{-1} 
(\CF \otimes_{\CO_{\SX}} \CG)(p_1,\phi^{-1})
\]
We have a commutative diagram, with $U_1 := U \times_{\SX} U$,
\[
\begin{tikzpicture}
      \draw[white] (0cm,2cm) -- +(0: \linewidth)
      node (G) [black, pos = 0.22] {$A_1 \otimes_{A_0} (\CF(U,p) \otimes_{A_0} \CG(U,p))$}
      node (H) [black, pos = 0.78] {$\CF(U_1,pp_2) \otimes_{A_1} \CG(U_1,pp_2)$};
      \draw[white] (0cm,0.25cm) -- +(0: \linewidth)
      node (E) [black, pos = 0.5] {$(A_1 \otimes_{A_0} (\CF(U,p)) \otimes_{A_1} (A_1 \otimes_{A_0} \CG(U,p))$};
      \draw [->] (G) -- (H) node[above, midway, scale=0.75]{$\wadj{((\CF \otimes_{\CO_{\SX}} \CG)(p_2,\iid))}$};
      \draw [->] (G) -- (E) node[auto, swap, midway, scale=0.75]{$w$};
      \draw [->] (E) -- (H) node[auto, swap, midway, scale=0.75]{$\wadj{\CF(p_2,\iid)}\otimes_{A_1}\wadj{\CG(p_2,\iid)}$};  
\end{tikzpicture}
\]
notation as in Proposition \ref{qcten}. Therefore,
\begin{align*}
\psi_{P} &= w^{-1} \circ (\wadj{\CF(p_2,\iid))} \otimes_{A_1} \wadj{\CG(p_2,\iid)})^{-1} \circ (\CF \otimes_{\CO_{\SX}} \CG)(p_1,\phi^{-1}) \\ 
    & = w^{-1} \circ (\wadj{\CF(p_2,\iid)} \otimes_{A_1} \wadj{\CG(p_2,\iid)})^{-1} \circ (\CF(p_1,\phi^{-1}) \otimes_{A_1} \CG(p_1,\phi^{-1})  \\
    & = g \circ         
    (\wadj{(\CF(p_2,\iid)})^{-1} \circ \CF(p_1,\phi^{-1}))
    \otimes_{A_0} 
    (\wadj{(\CG(p_2,\iid)})^{-1} \circ \CG(p_1,\phi^{-1}))
    \\ 
    & = g \circ (\psi_M \otimes \psi_N)             
\end{align*}
as wanted.
\end{proof}

\begin{cosa} \textbf{The \emph{hom} comodule.}
Let, as before, $A_\bullet$ be a Hopf algebroid and $\SX = \stack(A_\bullet)$ its associated geometric stack. Given $A_\bullet$-comodules $M$, $N$ there is an associated abelian group $\Hom_{A_\bullet\com}(M, N)$. This does not support the comodule structure for an internal hom in the category, as one can realize by considering the case in which $M = A_0$. See the example after Corollary \ref{comphom}. The construction of the comodule that corresponds to the internal hom will start with a simple characterization when the second comodule $N$ is extended and from this case one defines it for every comodule. The following easy observation will be of use.

\begin{lem}\label{adjmoco}
There is an adjunction
\[
\begin{tikzpicture}
\matrix (m) [matrix of math nodes, row sep=3em, column sep=3em]{
A_\bullet\com &  A_0\md \\
}; 
\draw [transform canvas={yshift= 0.3ex},font=\scriptsize,<-]
(m-1-1) -- node[above]{$A_1\otimes_{A_0}-$}(m-1-2);
\draw [transform canvas={yshift= -0.3ex},font=\scriptsize,->]
(m-1-1) -- node[below]{$\SU$}(m-1-2);
\end{tikzpicture}
\]
where for $M \in A_\bullet\com$ we define $\SU(M, \psi) = M$ and for $N \in A_0\md$ the module $A_1 \otimes_{A_0} N$ is endowed with its canonical extended comodule structure.
\end{lem}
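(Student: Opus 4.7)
My plan is to exhibit the unit and counit of the claimed adjunction explicitly and verify the two triangle identities. For an $A_\bullet$-comodule $(M, \psi_M)$, define the unit $\eta_M \colon M \to A_1 \otimes_{A_0} \SU(M)$ to be the coaction $\psi_M$ itself; this is a morphism of comodules precisely by the coassociativity axiom, since the canonical extended comodule structure on $A_1 \otimes_{A_0} M$ is given by $\Delta \otimes \id_M$. For an $A_0$-module $N$, define the counit $\varepsilon_N \colon \SU(A_1 \otimes_{A_0} N) \to N$ to be $\varepsilon \otimes \id_N$, where $\varepsilon \colon A_1 \to A_0$ is the counit of the Hopf algebroid; this is manifestly $A_0$-linear.

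One triangle identity, applied to $(M, \psi_M)$, amounts to $(\varepsilon \otimes \id_M) \circ \psi_M = \id_M$, which is the counit axiom of a comodule. The other triangle, applied to $N \in A_0\md$, reduces to $(\id_{A_1} \otimes \varepsilon \otimes \id_N) \circ (\Delta \otimes \id_N) = \id_{A_1 \otimes_{A_0} N}$, which follows from the Hopf algebroid counit axiom $(\id_{A_1} \otimes \varepsilon) \circ \Delta = \id_{A_1}$. Naturality of $\eta$ and $\varepsilon$ is routine.

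Equivalently, one can give the hom-set bijection directly: to $f \colon \SU(M) \to N$ in $A_0\md$ associate $\widetilde f := (\id_{A_1} \otimes f) \circ \psi_M$, which is a comodule morphism by coassociativity of $\psi_M$; to a comodule morphism $g \colon M \to A_1 \otimes_{A_0} N$ associate $\overline g := (\varepsilon \otimes \id_N) \circ g$. The identity $\overline{\widetilde f} = f$ is the counit axiom of $\psi_M$, while $\widetilde{\overline g} = g$ follows by combining the fact that $g$ is a comodule map with the counit axiom of $\Delta$. I do not expect any serious obstacle; this is the classical cofree/extended comodule adjunction in the Hopf algebroid setting (\emph{cf.}\ \cite[Appendix A1]{R}). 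The only point requiring mild attention is bookkeeping which of the two $A_0$-module structures on $A_1$ (via $\eta_L$ or $\eta_R$) is used to form each tensor product and the resulting $A_0$-action on $A_1 \otimes_{A_0} N$.
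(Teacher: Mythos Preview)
Your argument is correct: the unit $\psi_M$ and counit $\varepsilon\otimes\id_N$ do satisfy the triangle identities by, respectively, the counit axiom for comodules and the counit axiom for the comultiplication $\Delta$, and your alternative hom-set bijection is the standard one. The caution about the $\eta_L$/$\eta_R$ bookkeeping is well placed but causes no trouble here.

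By contrast, the paper does not argue directly at all: its entire proof is the sentence ``It is a special case of \cite[7.3]{gstck}.'' That reference treats, in the geometric language of the companion paper, the adjunction between quasi-coherent sheaves on a stack and on an affine presentation, of which the comodule statement is the algebraic translation. So the paper's approach situates the lemma within a general functorial framework already developed elsewhere, at the cost of sending the reader to another source; your approach is self-contained and makes the (easy) mechanism completely explicit, which is arguably preferable for a lemma this elementary. Either is acceptable; yours requires no outside input beyond the Hopf algebroid axioms.
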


\begin{proof}
It is a special case of \cite[7.3]{gstck}.
\end{proof}

From now on we will mostly omit the structure morphism in $(M, \psi)$ and we will use systematically the notation $\psi = \psi_M$ for it. To our end, first consider $M \in A_{\bullet}\com$ and $N' \in A_0\md$. Define
\[
\chom{A_\bullet}(M, A_1 \otimes_{A_0} N') := 
A_1 \otimes_{A_0} \Hom_{A_0}(M, N')
\]
with its structure of extended comodule, see \cite[5.3]{gstck}.

Let $\varphi \colon A_1 \otimes_{A_0} N \to A_1 \otimes_{A_0} N'$ be a homomorphism of extended comodules. Let us describe
\(
\chom{A_\bullet}(M, \varphi) \colon 
\chom{A_\bullet}(M, A_1 \otimes_{A_0} N) \to 
\chom{A_\bullet}(M, A_1 \otimes_{A_0} N')
\)
explicitly as follows. Consider the composition 
\[
(\varepsilon \otimes \id)\varphi(\id \otimes \ev) \colon
A_1 \otimes_{A_0} \Hom_{A_0}(M, N) \otimes_{A_0} M \lto N'
\]
where $\ev \colon \Hom_{A_0}(M, N) \otimes_{A_0} M  \to N$ denotes the evaluation morphism and $\varepsilon \colon A_1 \to A_0$ the counit of $A_{\bullet}$.
By adjunction one obtains an $A_0$-linear map
\[
\overline{\varphi} \colon A_1 \otimes_{A_0} \Hom_{A_0}(M, N) \lto
\Hom_{A_0}(M, N') 
\]
and we apply the adjunction in Lemma \ref{adjmoco} which yields the desired comodule homomorphism $\chom{A_\bullet}(M, \varphi)$. Further, this construction induces a natural isomorphism for extended comodule homomorphisms
\begin{equation}\label{adjind}
\Hom_{A_\bullet\com}(P, \chom{A_\bullet}(M, A_1 \otimes_{A_0} N))) \iso
\Hom_{A_\bullet\com}(P \otimes_{A_\bullet}^\cc M, A_1 \otimes_{A_0} N)).
\end{equation}

Now, present a general comodule $N \in A_{\bullet}\com$ as a kernel of extended comodules
\begin{equation}\label{prescomod}
 0 \lto N \xto{\psi_N} A_1 \otimes_{A_0} N \lto A_1 \otimes_{A_0} N'
\end{equation}
and define the \emph{hom} comodule as the following kernel
\[
0 \lto \chom{A_\bullet}(M, N) 
\lto \chom{A_\bullet}(M, A_1 \otimes_{A_0} N) 
\lto \chom{A_\bullet}(M, A_1 \otimes_{A_0} N').
\]
This definition extends to all comodules by functoriality of kernels in view of the presentation (\ref{prescomod}).
\end{cosa}

\begin{prop}\label{adjexttorcom}
Let $A_\bullet$ be a Hopf algebroid and $P, M, N \in A_{\bullet}\com$. We have the following isomorphism
\[
\Hom_{A_\bullet\com}(P, \chom{A_\bullet}(M, N)) \iso
\Hom_{A_\bullet\com}(P \otimes_{A_\bullet}^\cc M, N).
\]
This establishes an adjunction $-\otimes_{A_\bullet}^\cc M \dashv \chom{A_\bullet}(M, -)$ in $A_{\bullet}\com$.
\end{prop}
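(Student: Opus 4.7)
The strategy is to bootstrap from the case where $N$ is an extended comodule, where the adjunction is the isomorphism~(\ref{adjind}) already established, to the case of an arbitrary $N$, via the kernel presentation~(\ref{prescomod}) of $N$ by extended comodules together with the left exactness of $\Hom$ in the second variable.

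First, I would verify the naturality of~(\ref{adjind}) in the extended-comodule variable: for every morphism $\varphi \colon A_1 \otimes_{A_0} N \to A_1 \otimes_{A_0} N'$ of extended comodules, post-composition with $\chom{A_\bullet}(M, \varphi)$ on the left-hand side of~(\ref{adjind}) corresponds to post-composition with $\varphi$ on the right-hand side. This amounts to unwinding the explicit construction of $\chom{A_\bullet}(M, \varphi)$ in terms of the evaluation map, the counit $\varepsilon$, and the adjunction of Lemma~\ref{adjmoco}, and then diagram-chasing its interaction with the tensor--hom adjunction over $A_0$.

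Second, for arbitrary $P$, $M$, $N$, I use~(\ref{prescomod}) and apply two left-exact functors. On one hand, applying $\Hom_{A_\bullet\com}(P, -)$ to the defining kernel sequence of $\chom{A_\bullet}(M, N)$ realizes the left-hand side as
\[
\ker\Bigl[\Hom_{A_\bullet\com}\bigl(P, \chom{A_\bullet}(M, A_1 \otimes_{A_0} N)\bigr) \lto \Hom_{A_\bullet\com}\bigl(P, \chom{A_\bullet}(M, A_1 \otimes_{A_0} N')\bigr)\Bigr].
\]
On the other hand, applying $\Hom_{A_\bullet\com}(P \otimes_{A_\bullet}^\cc M, -)$ to~(\ref{prescomod}) realizes the right-hand side as the analogous kernel with $\chom{A_\bullet}(M, A_1 \otimes_{A_0} -)$ replaced by $A_1 \otimes_{A_0} -$ in the second slot of $\Hom$. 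The isomorphism~(\ref{adjind}), applied to each of the terms $N$ and $N'$, identifies the two diagrams term-by-term, and the compatibility from the first step ensures the two maps agree, so the induced map on kernels is the desired natural isomorphism. The adjunction claim then follows formally.

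The main obstacle is the naturality asserted in the first step: although the construction of $\chom{A_\bullet}(M, \varphi)$ is given explicitly, it passes through an evaluation, the counit $\varepsilon$, and two successive adjunctions, so its compatibility with the tensor--hom adjunction on the other side of~(\ref{adjind}) is not formal and requires a careful diagram-chase. Once this is done, the reduction to the extended case via~(\ref{prescomod}) and left exactness is routine.
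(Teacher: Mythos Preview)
Your proposal is correct and follows essentially the same approach as the paper: both reduce to the extended-comodule case via the isomorphism~(\ref{adjind}) and then use the kernel presentation~(\ref{prescomod}) to handle a general $N$. The paper phrases this by explicitly constructing the two adjunction maps and checking that each factors through the relevant kernel, whereas you organize the same reduction as an identification of kernels of a naturality square built from~(\ref{adjind}); these are two packagings of the same argument, and the naturality check you flag is exactly what underlies the paper's factorization claims.
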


\begin{proof}
Let $f \colon P \otimes_{A_\bullet}^\cc M \to N$ and consider the composition
\[
P \otimes_{A_\bullet}^\cc M \overset{f}{\lto} N \overset{\psi_N}{\lto} A_1 \otimes_{A_0} N
\xto{\varphi} A_1 \otimes_{A_0} N'
\]
where $\varphi$ is induced by a presentation like (\ref{prescomod}). The morphism $\psi_N \circ f$ corresponds to $\overline{f} \colon P \to \chom{A_\bullet}(M, A_1 \otimes_{A_0} N)$ by \eqref{adjind}. Notice that the  morphism $\chom{A_\bullet}(M, \varphi) \circ \overline{f} = 0$, therefore $\overline{f}$ factors through a map
\[
\wadj{f} \colon P \lto \chom{A_\bullet}(M, N).
\]

Conversely, let $g \colon P \to \chom{A_\bullet}(M, N)$. It induces a comodule morphism $g' \colon P \to \chom{A_\bullet}(M, A_1 \otimes_{A_0} N)$  that corresponds by invoking again \eqref{adjind} to $\overline{g} \colon P \otimes_{A_\bullet}^\cc M \to A_1 \otimes_{A_0} N$. It holds that $\varphi \circ \overline{g} = 0$ and this gives a morphism
\[
\wadj{g} \colon P \otimes_{A_\bullet}^\cc M \lto N
\]
One checks that these two maps correspond to the desired adjunction.
\end{proof}

\begin{cor}
Let $A_\bullet$ be a Hopf algebroid and $P, M, N \in A_{\bullet}\com$. The previous adjunction also hold internally, \ie{} we have the following isomorphism
\[
\chom{A_\bullet}(P, \chom{A_\bullet}(M, N)) \liso
\chom{A_\bullet}(P \otimes_{A_\bullet}^\cc M, N).
\]
\end{cor}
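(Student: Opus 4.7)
The plan is to deduce the internal adjunction from the external one (Proposition \ref{adjexttorcom}) by a Yoneda-style argument, exactly parallel to the derived category version proved just before Section 5. The essential input is that $A_\bullet\com$ is a symmetric monoidal closed category with $\otimes_{A_\bullet}^\cc$ as tensor.

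First I would record that the tensor $\otimes_{A_\bullet}^\cc$ is associative (and symmetric, with unit $A_0$): this can either be verified directly from the explicit formula $g \circ (\psi_M \otimes \psi_N)$ using associativity of $\otimes_{A_0}$ and of multiplication in $A_1$, or, more cleanly, transported through the equivalence $\GGamma_p^{\SX}\colon \A_\qc(\SX) \liso A_\bullet\com$ via Proposition \ref{comptor}, since $\otimes_{\CO_{\SX}}$ is manifestly associative and symmetric. Combined with Proposition \ref{adjexttorcom}, this exhibits $A_\bullet\com$ as a symmetric monoidal closed category, at which point the result is formal (\cite[Exercise (3.5.3)(e)]{yellow}).

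For an explicit verification, I would apply $\Hom_{A_\bullet\com}(Q,-)$ to both sides and rewrite using Proposition \ref{adjexttorcom} and associativity:
\begin{align*}
\Hom_{A_\bullet\com}(Q, \chom{A_\bullet}(P, \chom{A_\bullet}(M, N)))
  &\cong \Hom_{A_\bullet\com}(Q \otimes_{A_\bullet}^\cc P, \chom{A_\bullet}(M, N)) \\
  &\cong \Hom_{A_\bullet\com}((Q \otimes_{A_\bullet}^\cc P) \otimes_{A_\bullet}^\cc M, N) \\
  &\cong \Hom_{A_\bullet\com}(Q \otimes_{A_\bullet}^\cc (P \otimes_{A_\bullet}^\cc M), N) \\
  &\cong \Hom_{A_\bullet\com}(Q, \chom{A_\bullet}(P \otimes_{A_\bullet}^\cc M, N)).
\end{align*}
Since these isomorphisms are natural in $Q$, the Yoneda lemma produces the desired comodule isomorphism.

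The only non-routine point is checking the associativity isomorphism at the comodule level, in particular that the natural $A_0$-module associator is compatible with the structure maps. The quickest route is to invoke the equivalence $\GGamma_p^{\SX}$ together with Proposition \ref{comptor}, which reduces associativity for $\otimes_{A_\bullet}^\cc$ to the corresponding statement for $\otimes_{\CO_{\SX}}$ on $\A_\qc(\SX)$, already used in Section 3. Everything else is formal manipulation of adjunctions.
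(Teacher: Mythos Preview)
Your proposal is correct and matches the paper's approach exactly: the paper's proof is the single sentence ``This follows as usual formally from the axioms of closed category, see \cite[Exercise (3.5.3)(e)]{yellow}.'' Your explicit Yoneda argument and discussion of associativity simply unpack what that reference contains, so there is no substantive difference.
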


\begin{proof}
This follows as usual formally from the axioms of closed category, see \cite[Exercise (3.5.3)(e)]{yellow}.\end{proof}

\begin{rem}
 As a consequence we have the following isomorphism
\[
\Hom_{A_\bullet\com}(A_0, \chom{A_\bullet}(M, N)) \liso
\Hom_{A_\bullet\com}(M, N).
\]
expressing the relationship between external and internal homs.
\end{rem}

\begin{cor}\label{comphom}
 For $\CF$ and $\CG$ in $\A_\qc(\SX) $, we have the following isomorphism of bifunctors
 \[
\GGamma_p^{\SX}(\SQ_{\SX}\!\shom_{\SX}(\CF, \CG)) \liso 
\chom{A_\bullet}(\GGamma_p^{\SX}(\CF), \GGamma_p^{\SX}(\CG))
\]
\end{cor}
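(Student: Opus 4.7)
The plan is a pure Yoneda argument: represent both sides by the same functor on $A_\bullet\com$ by chaining together the adjunctions already at our disposal, and then invoke Yoneda. Throughout, let $\Psi := (\GGamma_p^{\SX})^{-1}$ denote a quasi-inverse of the equivalence $\GGamma_p^{\SX}\colon \A_\qc(\SX) \liso A_\bullet\com$ of \cite[Corollary 5.9]{gstck}.

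For an arbitrary $P \in A_\bullet\com$, I would construct the following natural chain of isomorphisms:
\begin{align*}
\Hom_{A_\bullet\com}(P,\, \GGamma_p^{\SX}(\SQ_{\SX}\!\shom_{\SX}(\CF,\CG)))
  &\cong \Hom_{\A_\qc(\SX)}(\Psi P,\, \SQ_{\SX}\!\shom_{\SX}(\CF,\CG)) \\
  &\cong \Hom_{\A_\qc(\SX)}(\Psi P \otimes_{\CO_{\SX}} \CF,\, \CG) \\
  &\cong \Hom_{A_\bullet\com}(\GGamma_p^{\SX}(\Psi P \otimes_{\CO_{\SX}} \CF),\, \GGamma_p^{\SX}(\CG)) \\
  &\cong \Hom_{A_\bullet\com}(P \otimes_{A_\bullet}^{\cc} \GGamma_p^{\SX}(\CF),\, \GGamma_p^{\SX}(\CG)) \\
  &\cong \Hom_{A_\bullet\com}(P,\, \chom{A_\bullet}(\GGamma_p^{\SX}(\CF), \GGamma_p^{\SX}(\CG))).
\end{align*}
The first and third isomorphisms come from the equivalence $\GGamma_p^{\SX}$; the second is the tensor--hom adjunction of Proposition~\ref{adjtenhom}; the fourth follows from Proposition~\ref{comptor} applied to $\Psi P \otimes_{\CO_{\SX}} \CF$; and the fifth is the adjunction in Proposition~\ref{adjexttorcom}.

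Since each step is natural in $P$, the composite is a natural isomorphism of functors $A_\bullet\com^{\op} \to \ab$, and Yoneda's lemma produces a canonical isomorphism $\GGamma_p^{\SX}(\SQ_{\SX}\!\shom_{\SX}(\CF,\CG)) \iso \chom{A_\bullet}(\GGamma_p^{\SX}(\CF), \GGamma_p^{\SX}(\CG))$, which is easily checked to be bifunctorial in $\CF$ and $\CG$. The argument is almost entirely formal; the only nontrivial input is the already-proved compatibility of tensor products (Proposition~\ref{comptor}), so the only real risk is that the identifications interact badly under the equivalence. The main (small) obstacle is thus simply to verify bifunctoriality, i.e.\ that the dependence on $\CF$ and $\CG$ is natural; this follows by running the same chain of isomorphisms on morphisms and using naturality of each adjunction unit and counit involved.
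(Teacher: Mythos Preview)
Your proof is correct and is precisely the argument the paper has in mind: the paper's one-line proof (``equivalences of categories transform adjunctions into adjunctions'') together with Propositions~\ref{comptor} and~\ref{adjexttorcom} is exactly the Yoneda chain you wrote out, only stated at the level of adjoint functors rather than unpacked object-by-object. There is no substantive difference in approach.
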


\begin{proof}
 Equivalences of categories transform adjunctions into adjunctions, so the isomorphism follows from \cite[Corollary 5.9]{gstck}, using Proposition \ref{comptor} and Proposition \ref{adjexttorcom}.
\end{proof}

\begin{ex}
Let $G = \spec(H)$ be an affine flat algebraic group over an affine scheme $\spec(R)$. Assume that $G$ acts on the affine scheme $X = \spec(A)$. There is a structure of Hopf algebra on $H$ \cite[I. 2.3]{rag}. We are in the setting of \cite[Examples 5.2 and 5.11]{gstck}. In this situation, the pair $(A, H \otimes_{R} A)$ constitutes a Hopf algebroid with flat structure maps. 

Consider the stack:
\[
[X/G] := \stack(A, H \otimes_{R} A).
\]
This stack is the geometric quotient of the scheme $X$ by the action of $G$. The canonical quotient map $p \colon X \to [X/G]$ is a presentation. 

A comodule $M$ over $(A, H \otimes_{R} A)$ corresponds to a $G$-equivariant quasi-coherent sheaf over $X$. According to \cite[I. 2.10 (5)]{rag} (adapting the notation) for two comodules $M$ and $N$ we have that
\[
\Hom_{(A, H \otimes_{R} A)\com}(M, N) \equiv
\Hom_{G}(M, N) =
\Hom_{R}(M, N)^G
\]
while, for $M = R$ we have
\[
\chom{(A, H \otimes_{R} A)}(R, N) = N.
\]
In general, $N$ differs from $N^G$ unless the action is trivial.
\end{ex}

\begin{cosa} \textbf{The derived setting.}
Let $A_\bullet$ be a Hopf algebroid and $\SX = \stack(A_\bullet)$. Having concluded that $\GGamma_p^{\SX}$ (and, as a consequence its quasi-inverse) respects the closed structure on the equivalent categories, we transport this fact to the setting of derived categories. We denote by $\D(A_{\bullet}) := \D(A_{\bullet}\com)$ the derived category of complexes of $A_{\bullet}$-comodules. Being $\GGamma_p^{\SX}$ exact, it induces an equivalence of derived categories
\[
\GGamma_p^{\SX} \colon \D(\A_\qc(\SX)) \liso \D(A_{\bullet})
\]
that, as it is customary, we keep denoting the same. Notice that the bifunctor $\chom{A_\bullet}(-, -)$ in $A_\bullet\com$ defines a bifunctor \emph{hom complex} in the category $\CCC(A_{\bullet}) := \CCC(A_{\bullet}\com)$ of complexes of $A_{\bullet}$-comodules, compare \cite[(1.5.3)]{yellow}. Let $A_{\bullet}$ be an Adams Hopf algebroid, \ie{} a Hopf algebroid such that $\stack(A_\bullet)$ is an Adams geometric stack. In this case, imitating the proof of Lemma \ref{lema4}, one sees that it induces a bifunctor in $\D(A_{\bullet})$ that we write $\cdhom{A_\bullet}(-, -)$. Analogously, the derived functor of 
$- \otimes_{A_\bullet}^\cc - $ is defined and denoted $- \otimes_{A_\bullet}^{\cc\,\LL} -$. The next result upgrades the previous discussion to the derived setting.
\end{cosa}

\begin{prop}\label{compder}
For $\CF$ and $\CG$ in $\D(\A_\qc(\SX))$, we have the following isomorphism of bifunctors taking values in $\D(A_{\bullet})$
\[
\GGamma_p^{\SX}(\CF \otimes^{\LL}_{\CO_{\SX}} \CG) \liso 
\GGamma_p^{\SX}(\CF) \otimes_{A_\bullet}^{\cc\,\LL} \GGamma_p^{\SX}(\CG)
\]
\[
\GGamma_p^{\SX}(\dhom^\bullet_{\SX}(\CF, \CG)) \liso 
\cdhom{A_\bullet}(\GGamma_p^{\SX}(\CF), \GGamma_p^{\SX}(\CG))
\] 
\end{prop}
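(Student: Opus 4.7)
The plan is to exploit the fact that $\GGamma_p^{\SX}$ is an exact equivalence of abelian categories whose quasi-inverse is also exact. In particular, $\GGamma_p^{\SX}$ preserves quasi-isomorphisms and acyclic complexes, so it induces an equivalence of derived categories, and the same goes for its quasi-inverse. Therefore both isomorphisms in the statement reduce to showing that $\GGamma_p^{\SX}$ sends the resolutions used to construct the left-hand derived functor to resolutions computing the right-hand one. The underived versions of these matchings have already been established in Proposition \ref{comptor} and Corollary \ref{comphom}.

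For the first isomorphism, choose a $q$-flat resolution $\CP_{\CF} \to \CF$ in $\K(\A_\qc(\SX))$, whose existence is guaranteed by Proposition \ref{qflatres} because $\SX = \stack(A_\bullet)$ is Adams. I claim that $\GGamma_p^{\SX}(\CP_{\CF})$ is $q$-flat in $\K(A_\bullet\com)$. Indeed, given an acyclic complex $N^\bullet \in \K(A_\bullet\com)$, let $\CA \in \K(\A_\qc(\SX))$ be its preimage under the equivalence; then $\CA$ is acyclic, so $\CP_{\CF} \otimes_{\CO_{\SX}} \CA$ is acyclic, and applying $\GGamma_p^{\SX}$ together with Proposition \ref{comptor} we obtain that $\GGamma_p^{\SX}(\CP_{\CF}) \otimes_{A_\bullet}^{\cc} N^\bullet$ is acyclic. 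Hence
\[
\GGamma_p^{\SX}(\CF) \otimes_{A_\bullet}^{\cc\,\LL} \GGamma_p^{\SX}(\CG)
\cong \GGamma_p^{\SX}(\CP_{\CF}) \otimes_{A_\bullet}^{\cc} \GGamma_p^{\SX}(\CG)
\cong \GGamma_p^{\SX}(\CP_{\CF} \otimes_{\CO_{\SX}} \CG)
\cong \GGamma_p^{\SX}(\CF \otimes^{\LL}_{\CO_{\SX}} \CG),
\]
where the middle isomorphism is again Proposition \ref{comptor} applied componentwise.

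For the second isomorphism, take a $q$-injective resolution $\CG \to \CI_{\CG}$ in $\K(\A_\qc(\SX))$, which exists because $\A_\qc(\SX)$ is a Grothendieck category. Then $\GGamma_p^{\SX}(\CI_{\CG})$ is $q$-injective in $\K(A_\bullet\com)$: for any acyclic $\CA \in \K(\A_\qc(\SX))$,
\[
\Hom_{\K(A_\bullet\com)}(\GGamma_p^{\SX}(\CA),\, \GGamma_p^{\SX}(\CI_{\CG})) \cong \Hom_{\K(\A_\qc(\SX))}(\CA, \CI_{\CG}) = 0,
\]
and every acyclic complex in $\K(A_\bullet\com)$ is of this form. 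By Lemma \ref{lema4} (and its analogue on the comodule side alluded to before the statement), the derived internal hom on either side is computed by applying $\SQ_{\SX}\shom^\bullet_{\SX}(\CF, -)$, respectively $\chom{A_\bullet}(\GGamma_p^{\SX}(\CF), -)$, to a $q$-injective resolution of the second variable. Invoking Corollary \ref{comphom} componentwise then yields
\[
\GGamma_p^{\SX}(\dhom^\bullet_{\SX}(\CF,\CG))
\cong \GGamma_p^{\SX}(\SQ_{\SX}\shom^\bullet_{\SX}(\CF, \CI_{\CG}))
\cong \chom{A_\bullet}(\GGamma_p^{\SX}(\CF), \GGamma_p^{\SX}(\CI_{\CG}))
\cong \cdhom{A_\bullet}(\GGamma_p^{\SX}(\CF), \GGamma_p^{\SX}(\CG)),
\]
as desired.

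The only genuine verification needed is the preservation of $q$-flatness and $q$-injectivity under the equivalence $\GGamma_p^{\SX}$, which is the main technical point but, as sketched above, is a direct consequence of the fact that $\GGamma_p^{\SX}$ (and its quasi-inverse) are exact and intertwine the monoidal and internal-hom structures at the underived level. Naturality in both variables follows from the naturality of the underived isomorphisms of Proposition \ref{comptor} and Corollary \ref{comphom} together with the universal property of derived functors.
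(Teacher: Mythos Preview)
Your proposal is correct and follows the same approach as the paper: the paper's own proof simply says that the isomorphisms are consequences of Proposition~\ref{comptor} and Corollary~\ref{comphom} in view of the preceding discussion (exactness of $\GGamma_p^{\SX}$ and the construction of the derived bifunctors on the comodule side). You have spelled out in detail exactly what that discussion leaves implicit, namely that the equivalence preserves $q$-flat and $q$-injective resolutions, which is precisely how one passes from the underived compatibility to the derived one.
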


\begin{proof}
By the previous discussion, they are consequences of Propostion \ref{comptor} and Corollary \ref{comphom}, respectively.
\end{proof} 

\begin{rem}
The equivalence between $\D(A_{\bullet})$ and $\D(\A_\qc(\SX))$, interchanges their corresponding symmetric closed monoidal structures. Moreover, it is clear that the collection of bounded complexes in $\D(A_{\bullet})$ whose underlying $A_0$-modules are projective and finitely generated are strongly dualizable objects and, by virtue of Theorem \ref{asht3}, they generate the derived category.
\end{rem}

\begin{cor}
Let $A_{\bullet}$ be an Adams Hopf algebroid. Then, the category $\D(A_\bullet)$ is a stable homotopy category in the sense of \cite{hps}. 
\end{cor}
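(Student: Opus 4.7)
The plan is to bootstrap the previous results: since $A_\bullet$ is Adams, $\SX := \stack(A_\bullet)$ is an Adams geometric stack, so Corollary~\ref{st=sht} already yields that $\D(\A_\qc(\SX))$ satisfies all five axioms of \cite{hps}. The only task is to transport this structure along the equivalence
\[
\GGamma_p^{\SX} \colon \D(\A_\qc(\SX)) \liso \D(A_\bullet)
\]
coming from \cite[Corollary 5.9]{gstck} (extended to derived categories via the exactness of $\GGamma_p^{\SX}$). Being an equivalence of categories between derived categories of Grothendieck categories, it is automatically a triangulated equivalence.

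First I would dispose of the ``formal'' axioms. Axiom (\ref{tri}) is transferred because $\GGamma_p^{\SX}$ is a triangulated equivalence. Axiom (\ref{cop}) follows because any equivalence preserves (and reflects) arbitrary coproducts. For axiom (\ref{brw}), given a cohomological $\SF \colon \D(A_\bullet)^{\op} \to \ab$, precompose with $\GGamma_p^{\SX}$ to get a cohomological functor on $\D(\A_\qc(\SX))^{\op}$; Theorem~\ref{asht145} supplies an object representing it, and its image under $\GGamma_p^{\SX}$ represents $\SF$.

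The two substantive axioms (\ref{scl}) and (\ref{sdg}) are handled by Proposition~\ref{compder}. That proposition says $\GGamma_p^{\SX}$ intertwines $\otimes^{\LL}_{\CO_{\SX}}$ with $\otimes_{A_\bullet}^{\cc\,\LL}$ and $\dhom^\bullet_{\SX}$ with $\cdhom{A_\bullet}$; it also sends the unit $\CO_{\SX}$ to $A_0$. Therefore all the coherence diagrams and the tensor-hom adjunction of Theorem~\ref{asht2} transfer verbatim to $\D(A_\bullet)$, giving axiom (\ref{scl}). For axiom (\ref{sdg}), the equivalence preserves strong dualizability because that property is defined using only $\otimes^{\LL}$, the internal hom, and the unit, all of which are preserved; likewise, generation in the sense of \cite{hps} is a property of the triangulated structure together with arbitrary coproducts, both of which are preserved. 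Hence the image of the generating set of perfect complexes provided by Theorem~\ref{asht3} is a set of strongly dualizable generators of $\D(A_\bullet)$; concretely, as already observed in the remark preceding this corollary, bounded complexes of comodules whose underlying $A_0$-modules are finitely generated and projective do the job.

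There is no real obstacle: the corollary is essentially a packaging statement. The only point requiring a line of care is the remark that the axioms of \cite[1.1]{hps} are categorical invariants of a triangulated symmetric monoidal closed category with arbitrary coproducts, so the equivalence $\GGamma_p^{\SX}$ combined with Corollary~\ref{st=sht} immediately produces the desired conclusion.
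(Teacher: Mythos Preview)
Your proposal is correct and follows essentially the same approach as the paper: the paper's proof simply says ``Combine Corollary~\ref{st=sht} with Proposition~\ref{compder} in view of the previous discussion,'' and your argument is precisely an unpacking of that sentence, transporting each axiom across the equivalence $\GGamma_p^{\SX}$ and invoking the remark about dualizable generators that immediately precedes the corollary.
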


\begin{proof}
Combine Corollary \ref{st=sht} with Proposition \ref{compder} in view of the previous discussion.
\end{proof}

\begin{cosa}\label{compHov}
\textbf{Comparison with Hovey's results}.
 Hovey, in \cite[\S 2]{hov}, defines the projective model structure on $\CCC(A_{\bullet})$. For our purposes it is enough to specify the class of weak equivalences. Let $\phi \colon M \to N$ denote a homomorphism in $\CCC(A_{\bullet})$. The class of weak equivalences is defined by
 \[
 \SW_{\hv} := \{ \phi \, /\, \Hom_{A_\bullet\com}(P, \phi) \text{ is a quasi-isomorphism } \forall P \in \SP \}
 \]
 where $\SP$ denotes the class of comodules whose underlying $A_0$-module is projective. He works systematically with the corresponding homotopy category. Let us denote it by
 \[
 \D_{\hv}(A_{\bullet}) := \CCC(A_{\bullet})[\SW_{\hv}^{-1}]
 \]
 Let $\SW$ denote the class of all quasi-isomorphisms in $\CCC(A_{\bullet})$; by \cite[Proposition 2.1.5]{hov}, we have $\SW_{\hv} \subset \SW$. This implies the existence of a canonical $\Delta$-functor
\[
\D_{\hv}(A_{\bullet}) \lto \D(A_{\bullet}).
\]
Notice that our constructions are the usual ones from homological algebra without fixing a particular model structure on $\CCC(A_{\bullet})$. We ignore if this functor is monoidal. Moreover, we do not know how Hovey's model structure on $\CCC(A_{\bullet})$ compares with those that have quasi-isomorphisms as weak equivalences, like the injective structure in \cite{hmc} or the flat one in \cite{gill}, among others. 
\end{cosa}



\end{document}